\newcolumntype{C}{>{$}c<{$}} % math-mode version of "c" column type
\newtheorem{theorem}{Theorem}[section]
\theoremstyle{plain}
\newtheorem{condition}[theorem]{Condition}
\newtheorem{corollary}[theorem]{Corollary}
\newtheorem{lemma}[theorem]{Lemma}
\newtheorem{problem}[theorem]{Problem}
\newtheorem{proposition}[theorem]{Proposition}
\numberwithin{equation}{section}
\theoremstyle{definition}
\newtheorem{definition}[theorem]{Definition}
\newtheorem{remark}[theorem]{Remark}
\newcommand{\aut}{\mathrm{Aut}\,}	            % automorphism group
\newcommand{\half}{\mathrm{Half}\,}             % half-automorphism group
\newcommand{\out}{\mathrm{Out}\,}               % Outer autommorphisms
\def\subsp#1{\langle #1\rangle}             % subspace
\newcommand{\cl}{\mathrm{CodeLoop}}	            % automorphism group
\begin{document}
	\title{The Half-automorphism Group of Code Loops}

	\author[de Barros]{Dylene Agda Souza de Barros}
	\author[Grishkov]{Alexandre Grichkov}
	\author[Miguel Pires]{Rosemary Miguel Pires}
	\author[Rasskazova]{Marina Rasskazova}
	\address[de Barros]{Instituto de Matem\'atica e Estat\'istica, Universidade Federal de Uberl\^{a}ndia, Av. Jo\~{a}o Naves de \'Avila, 2121, Campus Santa M\^onica, Uberl\^{a}ndia, MG, Brazil, CEP 38408-100}
	\email[de Barros]{dylene@ufu.br}
	\address[Grishkov]{Instituto de Matem\'atica e Estat\'istica, Universidade de S\~ao Paulo, Rua do Mat\~ao, 1010 Butant\~a, S\~ao Paulo, SP, Brazil, CEP 05508-090  and Sobolev Institute of Mathematics, Omsk, Russia}
	\email[Grishkov]{shuragri@gmail.com}
	\address[Miguel Pires]{Instituto de Ci\^encias Exatas, Universidade Federal Fluminense, Rua Desembargador Ellis Hermydio Figueira, Aterrado, Volta Redonda, RJ, Brazil, CEP 27213-145 } 
	\email[Pires]{rosemarypires@id.uff.br}
	\address[Rasskazova]{Centro de Matem\'atica, Computa\c c\~ao  e Cogni\c c\~ao, Universidade Federal do ABC, Avenida dos Estados, 5001, Bairro Bangu, Santo Andr\'e, SP, Brasil,
		CEP 09280-560; and Siberian State Automobile and Highway University, Prospekt Mira, 5, Omsk, Omsk Oblast, Rússia, 644080}
	\email[Rasskazova]{marina.rasskazova@ufabc.edu.br}
	
	\begin{abstract}
		For any code loop $L$, we prove that the half-automorphism group of $L$ is the product of the automorphism group of $L$ by an elementary abelian $2-$group consisting of all half-automorphisms that acts as the identity on a fixed basis. Also, we prove that elementary mappings only can be a half-automorphism on code loops of rank at most $3$.
	\end{abstract}

	\keywords{code loops, half-automorphism group, elementary mappings}
	\subjclass[2010]{Primary: 20N05. Secondary: 20B25.}

	\maketitle
	\section{Introduction}\label{Sc:Intro}
	
	A \emph{loop} is a set $L$ with a binary operation $\cdot$ and a neutral element $1\in L$ such that for every $a$, $b\in L$ the equations $ax=b$ and $ya=b$ have unique solutions $x$, $y\in L$, respectively.
	Let $(L,\ast)$ and $(L',\cdot)$ be loops. A bijection $f:L\to L'$ is a \emph{half-isomorphism} if $f(x\ast y)\in\{f(x)\cdot f(y),f(y)\cdot f(x)\}$, for every $x$, $y\in L$. A \emph{half-automorphism} is defined as expected. We say that a half-isomorphism (half-automorphism) is \emph{trivial} if it is either an isomorphism (automorphism) or an anti-isomorphism (anti-automorphism). A \emph{code loop} is a finite Moufang loop $L$ with a unique (central) nontrivial square (associator, commutator).
	
	In 1957, W.R. Scott proved every half-isomorphism between two groups is trivial \cite{Scott}. He also gave an example of a loop of order $8$ that has a nontrivial half-automorphism showing that his result does not generalize to all loops. However, we know now that Scott's result can be extended for some special cases of loops: Moufang loops in which the squaring map in the factor loop over their nucleus is surjective, automorphic loops of odd order \cite{KSV,GA21}.
	
	On the other hand, investigations on loops that have nontrivial half-automorphisms were also made. Gagola and Giuliani \cite{GG2} established conditions for the existence of nontrivial half-automorphisms for certain Moufang loops of even order, and dos Anjos described when a Chein loop has nontrivial half-automorphims, \cite{G21}. Another type of investigation in this topic is to describe the half-automorphism group of a loop that has nontrivial half-automorphisms \cite{BG, BP, GA19, GA20}.
	
	In this work, we approach the problem of describing the half-automorphism group of a code loop $L$ by considering the group $H_\mathcal{B}(L)$ of all half-automorphisms that act as identity on a fixed basis $\mathcal{B}$ of $L$. In Section \ref{Sc:HA} we prove that every half-automorphism of a code loop $L$ is a product of an automorphism and an element of $H_\mathcal{B}(L)$. As a consequence of this, we proved that a non-associative code loop $L$ has only trivial half-automorphisms if and only if $|H_\mathcal{B}(L)|=2$ for some basis $\mathcal{B}$, solving Problem 6.4 of \cite{BP}. In Section \ref{Sc:Rank3and4}, we apply the results of Section \ref{Sc:HA} for all code loops of rank $3$ and $4$. In Section \ref{Sc:elementary-map} we prove that in code loops of rank higher than $3$, for any $x$, the elementary mapping $\tau_x$ is not a half-automorphism, solving Problem 6.5 of \cite{BP}.

	%%%%%%%%%%%%%%%%
	\section{Preliminaires}\label{Sc:Preliminaires}
	
	A \emph{Moufang loop} is a loop $L$, in which $(x\cdot zx)y = x(z\cdot xy)$ for every $x$, $y$ and $z\in L$. Moufang loops are diassociative, IP loops and, whenever three elements associate, they generate a subgroup. Let $L$ be a loop. Denote by $L^2$, $Z(L)$, $A(L)$ and $[L,L]$ the set of all \emph{squares}, the \emph{center}, 
	the \emph{associator subloop}, and the \emph{commutator subloop} of $L$, respectively. For further results on Moufang loops and loops, see \cite{P90}.
	\subsection{Code loops} A code loop is a Moufang loop $L$ with a unique nontrivial square. Chein and Goodaire proved that for such loops, $A(L)=[L,L]=L^2\leq Z(L)$. We denote the unique nontrivial square of a code loop by $-1$ and for a nonassociative code loop we have $A(L)=[L,L]=L^2=\{1,-1\}$ and we write $(-1)x$ as $-x$. It is known that for every $x,y,z,w$ in a code loop $L$, $[xy,z]=[x,y][y,z](x,y,z)$, $(xy)^2=x^2y^2[x,y]$ and $(wx,y,z)=(w,y,z)(x,y,z)$ \cite{BP}. One can read \cite{Rose,Griess,CG} for details about code loops.
	
	A minimal set of generators for a code loop $L$ is called a \emph{basis} of $L$, and we say that $L$ has \emph{rank} $n$, if it has a basis with $n$ elements. If $\mathcal{B}=\{a_1,a_2,a_3\ldots,a_n\}$ is a basis of a code loop $L$, then every element of $L$ is of the form $\pm (\ldots(a_1a_2)a_3)\ldots a_{n-1})a_n$. In addition, the vector $\overline{\lambda}=(\lambda_1,\ldots,\lambda_n,\lambda_{1,2},\ldots,\lambda_{n-1,n},\lambda_{1,2,3},\ldots,\lambda_{n-2,n-1,n})$ where $\lambda_i$, $\lambda_{i,j}$, $\lambda_{i,j,k}\in\{0,1\}$, with $i<j<k$, is given by $a_i^2=(-1)^{\lambda_i}$, $[a_i,a_j]=(-1)^{\lambda_{i,j}}$ and $(a_i,a_j,a_k)=(-1)^{\lambda_{i,j,k}}$, respectively, is called \emph{characteristic vector} of $L$, related to $\mathcal{B}$. In this paper, the choice of a basis of a code loop $L$ and its related characteristic vector plays an important role in the description of the half-automorphism group of $L$ in terms of the automorphism group of $L$. In \cite{Rose,GP18} a classification of code loops of rank $3$ and $4$ according to the characteristic vector is presented.
	
	\begin{theorem}[\cite{GP18}, Theorem 3.1]\label{Th:G-Rose-posto3}
		Consider $C_{1}^{3},...,C_{5}^{3}$ the code loops with the following characteristic vectors:
		\begin{eqnarray*}
			\begin{array}{lllll}
				\lambda(C_{1}^{3})=(1,1,1,1,1,1), &&\lambda(C_{2}^{3})=(0,0,0,0,0,0),&&\lambda(C_{3}^{3})=(0,0,0,1,1,1),\\
				\lambda(C_{4}^{3})=(1,1,0,0,0,0),&&\lambda(C_{5}^{3})=(1,0,0,0,0,0).&&\\
			\end{array}
		\end{eqnarray*}
		Then any two loops from the list $\left\{C_{1}^{3},...,C_{5}^{3}\right\}$  are not isomorphic and every nonassociative
		code loop of rank $3$ is isomorphic to one of this list.
	\end{theorem}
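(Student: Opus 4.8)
The plan is to recast the statement as a $\gl_3(\mathbb{F}_2)$-orbit problem on characteristic vectors. Fix a nonassociative code loop $L$ of rank $3$ and put $V=L/\{1,-1\}\cong\mathbb{F}_2^3$. Squaring, the commutator and the associator descend to well-defined maps $P\colon V\to\mathbb{F}_2$, $C\colon V\times V\to\mathbb{F}_2$ and $A\colon V^3\to\mathbb{F}_2$ via $x^2=(-1)^{P(\bar x)}$, $[x,y]=(-1)^{C(\bar x,\bar y)}$ and $(x,y,z)=(-1)^{A(\bar x,\bar y,\bar z)}$; the identities recorded in Section~\ref{Sc:Preliminaires} show that $C$ is the second difference of $P$ and $A$ the third. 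Since $A$ is a nonzero alternating trilinear form on $\mathbb{F}_2^3$ it must be the determinant, so in every basis $\lambda_{1,2,3}=1$ is forced --- this is why the list records only the six coordinates $(\lambda_1,\lambda_2,\lambda_3,\lambda_{1,2},\lambda_{1,3},\lambda_{2,3})$. Expanding $P$ as a reduced Boolean polynomial yields $P=\sum_i\lambda_i x_i+\sum_{i<j}\lambda_{i,j}x_ix_j+x_1x_2x_3$, so the characteristic vector is exactly the coefficient vector of $P$. By the structure theory of code loops \cite{Griess,CG} a code loop is determined up to isomorphism by $(P,C,A)$, equivalently by $P$, and a change of basis of $L$ is a linear change of basis of $V$; hence isomorphism classes of nonassociative rank-$3$ code loops are in bijection with $\gl_3(\mathbb{F}_2)$-orbits of the $64$ admissible vectors $\overline\lambda\in\mathbb{F}_2^6$.

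First I would make the action explicit. It suffices to describe the basis transvections $a_i\mapsto a_ia_j$ (the other generators fixed) together with the permutations of the basis, since these generate $\gl_3(\mathbb{F}_2)$. Feeding the transvection $a_1\mapsto a_1a_2$ through $(xy)^2=x^2y^2[x,y]$ and the commutator/associator identities gives the closed update $\lambda_1\mapsto\lambda_1+\lambda_2+\lambda_{1,2}$ and $\lambda_{1,3}\mapsto\lambda_{1,3}+\lambda_{2,3}+1$, with all other coordinates (including $\lambda_{1,2,3}$) unchanged, and symmetrically for the remaining transvections. These formulas are the computational engine of the rest of the proof.

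Next I would separate the two halves of the statement using two manifestly isomorphism-invariant quantities attached to $P$: the count $N=|\{v\in V:P(v)=1\}|$ (half the number of elements of order $4$ in $L$) and the dimension $d=\dim\langle\,v:P(v)=1\,\rangle$. A direct evaluation gives $(N,d)=(7,3),(1,1),(3,2),(5,3),(3,3)$ for $C_1^3,C_2^3,C_3^3,C_4^3,C_5^3$ respectively. Since these five pairs are pairwise distinct and $(N,d)$ is preserved by every isomorphism, the five loops are pairwise non-isomorphic, which settles one direction. For the other direction I would show, using the transvection update rules, that an arbitrary admissible $\overline\lambda$ can be normalised to one of the five representatives: working within each stratum of the invariant $N$, the moves above are used to kill the linear and then the quadratic coordinates down to the listed form.

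The main obstacle is exactly this last reduction: proving that the transvection moves suffice to collapse every admissible vector onto one of the five normal forms, i.e.\ that $\gl_3(\mathbb{F}_2)$ produces no sixth orbit. I expect this to require a careful but finite case analysis organised by the value of $N$ (and, when $N=3$, by $d$, to separate $C_3^3$ from $C_5^3$); a Burnside count over the $168$ elements of $\gl_3(\mathbb{F}_2)$ acting on the $64$ vectors can serve as an independent check that the number of orbits is precisely five. Once the reduction is established, together with the invariant $(N,d)$ distinguishing the representatives, the orbit count is pinned to five and the theorem follows.
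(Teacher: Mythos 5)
First, a point of reference: the paper does not prove this theorem at all --- it is imported verbatim from \cite{GP18} (Theorem 3.1 there), so your attempt can only be measured on its own terms, not against an internal proof.

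Your framework is correct: for a nonassociative rank-$3$ code loop the associator descends to a nonzero alternating trilinear form on $V=L/\{1,-1\}\cong\mathbb{F}_2^3$, hence is the determinant and forces $\lambda_{1,2,3}=1$ in every basis; isomorphism classes correspond to $\gl_3(\mathbb{F}_2)$-orbits of the squaring function $P$ by Griess's theory \cite{Griess,CG}; your transvection update rules are right (including the extra $+1$ in $\lambda_{1,3}\mapsto\lambda_{1,3}+\lambda_{2,3}+1$, which comes from $[xy,z]=[x,z][y,z](x,y,z)$ with $\lambda_{1,2,3}=1$); and your invariant pairs $(N,d)=(7,3),(1,1),(3,2),(5,3),(3,3)$ for $C_1^3,\ldots,C_5^3$ check out, so the pairwise non-isomorphism half is genuinely proved. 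The gap is in the other half, and you flag it yourself as ``the main obstacle'': you never carry out the reduction of an arbitrary admissible vector to one of the five normal forms, nor the Burnside count you offer as a check. As written, the assertion that there is no sixth orbit is an expectation, not an argument --- and that assertion \emph{is} the statement that every nonassociative code loop of rank $3$ lies in the list.

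The gap is real but easily closed, and more cleanly than by the case analysis you are bracing for: trade the coefficient vector of $P$ for its value set. An admissible $P$ is determined by $S=\{v\in V\setminus\{0\}\,:\,P(v)=1\}$, and since the top coefficient of the algebraic normal form of a Boolean function is $\sum_{v}P(v)$, the condition $\lambda_{1,2,3}=1$ says exactly that $|S|$ is odd. So your orbit problem is the action of $\gl_3(\mathbb{F}_2)$ on odd subsets of the seven nonzero vectors (the Fano plane), and standard transitivity facts finish it: $\gl_3(\mathbb{F}_2)$ is transitive on nonzero vectors ($|S|=1$); for $|S|=3$ the set is either the nonzero part of a $2$-dimensional subspace or a basis, and the group is transitive on subspaces and on unordered bases (these are your two cases $d=2$ and $d=3$); it is $2$-transitive on points, hence transitive on complements of pairs ($|S|=5$); and the full set is invariant ($|S|=7$). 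That gives exactly five orbits, of sizes $7+7+28+21+1=64$, matching the admissible vectors, so every nonassociative rank-$3$ code loop is isomorphic to one of $C_1^3,\ldots,C_5^3$ and your invariant $(N,d)$ already separates them.
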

	
	In the case where $L$ is a code loop of rank $4$, Grichkov and Miguel Pires proved that $L$ possesses a basis $\mathcal{B}=\{a_1,a_2,a_3,a_4\}$, where $a_4$ is an element which is in the nucleus of $L$. Thus, the last four coordinates of the characteristic vector related to $\mathcal{B}$ are $1000$, and these coordinates were omitted in the classification theorem.
	
	\begin{theorem}[\cite{GP18}, Theorem 3.5]\label{Th:G-Rose-posto4}
		Consider $C_{1}^{4},...,C_{16}^{4}$ the code loops with the following characteristic vectors. All nonassociative code loops of rank $4$ are isomorphic to one from the list. Moreover, none of those loops is isomorphic to each other.
		\begin{table}[!hhh]
			\centering
			{\begin{tabular}{llll|lllll|l}
					&  &  & $L$ & $\lambda(L)$ &  &  &  & $L$ & $\lambda(L)$ \\
					\cline{4-5}\cline{9-10}
					&  &  & $C^{4}_{1}$ & $(1110110100)$ &  &  &  & $C^{4}_{9}$ & $(0100001000)$ \\
					&  &  & $C^{4}_{2}$ & $(0000000000)$ &  &  &  & $C^{4}_{10}$ & $(0001111000)$ \\
					&  &  & $C^{4}_{3}$ & $(0000110100)$ &  &  &  & $C^{4}_{11}$ & $(0001001000)$ \\
					&  &  & $C^{4}_{4}$ & $(0010100000)$ &  &  &  & $C^{4}_{12}$ & $(0000001100)$ \\
					&  &  & $C^{4}_{5}$ & $(0000010100)$ &  &  &  & $C^{4}_{13}$ & $(0110111100)$ \\
					&  &  & $C^{4}_{6}$ & $(1111110100)$ &  &  &  & $C^{4}_{14}$ & $(0001001100)$ \\
					&  &  & $C^{4}_{7}$ & $(0001000000)$ &  &  &  & $C^{4}_{15}$ & $(1001001100)$ \\
					&  &  & $C^{4}_{8}$ & $(0000001000)$ &  &  &  & $C^{4}_{16}$ & $(0001111100)$ \\
			\end{tabular}}
			\label{tabvc4}
		\end{table}
	\end{theorem}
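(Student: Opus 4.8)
The plan is to translate the classification into linear algebra over $\mathbb{F}_2$ via combinatorial polarization and then run a finite orbit computation. Write $V=L/\{1,-1\}\cong\mathbb{F}_2^4$ and encode the loop by its squaring function $P\colon V\to\mathbb{F}_2$, defined by $x^2=(-1)^{P(\bar x)}$. The code-loop identities $(xy)^2=x^2y^2[x,y]$ and $[xy,z]=[x,z][y,z](x,y,z)$ say precisely that the commutator form $C$ (with $[x,y]=(-1)^{C(\bar x,\bar y)}$) and the associator form $T$ (with $(x,y,z)=(-1)^{T(\bar x,\bar y,\bar z)}$) are the first and second combinatorial polarizations of $P$; in particular $T$ is a symmetric alternating trilinear form, $P$ has combinatorial degree at most $3$, and $L$ is nonassociative iff $T\neq 0$. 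Moreover two rank-$4$ nonassociative code loops are isomorphic iff their functions $P$ lie in the same orbit under $\gl(4,2)=\gl(V)$: an isomorphism fixes the unique nontrivial square $-1$, descends to a linear map of $V$, and the condition $f(x)^2=f(x^2)$ forces it to preserve $P$, and conversely any $P$-preserving linear map lifts. Thus the theorem is equivalent to listing the $\gl(4,2)$-orbits of functions $P$ of combinatorial degree exactly $3$.

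First I would normalize the associator. Since $T$ is a nonzero element of $\wedge^3 V^{*}$, and over $\mathbb{F}_2$ one has $\wedge^3 V^{*}\cong V$ (because $\wedge^4 V^{*}$ is one-dimensional and the determinant is trivial), the group $\gl(4,2)$ acts on the nonzero alternating trilinear forms with a single orbit. Hence a basis $\mathcal{B}=\{a_1,a_2,a_3,a_4\}$ may be chosen so that $T(a_1,a_2,a_3)=1$ and $T=0$ on every other triple; equivalently $a_4$ spans the one-dimensional radical of $T$, so $a_4$ associates trivially with every element and lies in $N(L)$. This is exactly the structural lemma that a rank-$4$ code loop has a nuclear generator, and it forces the last four associator coordinates of the characteristic vector to be $1000$, which is why they are suppressed in the table.

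With $T$ normalized, the free remaining data are the commutator coordinates $\lambda_{i,j}$ (the six values $C(a_i,a_j)$, which given $T$ range over an alternating bilinear form) and the squaring coordinates $\lambda_i$ (the four values $P(a_i)$, which given $C$ range over a linear functional); every one of the $2^{10}$ choices yields a code loop, and they must be classified up to the stabilizer $G=\mathrm{Stab}_{\gl(4,2)}(T)$. That stabilizer fixes the radical line $\langle a_4\rangle$, induces on $V/\langle a_4\rangle\cong\mathbb{F}_2^3$ the full group $\gl(3,2)$, and is extended by the transvections $a_i\mapsto a_i+\varepsilon_i a_4$, so $G\cong\mathbb{F}_2^{3}\rtimes\gl(3,2)$. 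I would then enumerate the orbits of $G$ on this $2^{10}$-element set, extracting one representative per orbit; this is the $\mathbb{F}_2$-analogue of the rank-$3$ computation underlying Theorem~\ref{Th:G-Rose-posto3}, where the ambient group is all of $\gl(3,2)$ and one obtains $5$ classes.

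The main obstacle is the orbit bookkeeping together with proving that the sixteen representatives are pairwise non-isomorphic. Completeness requires that each $G$-orbit is represented exactly once, and distinctness requires isomorphism invariants fine enough to separate all sixteen classes; natural candidates are the number of elements of order $4$ (equivalently the weight of $P$), the isomorphism type of the pair $(C,T)$, and the sizes of $Z(L)$ and $N(L)$. Verifying that these invariants actually distinguish the listed characteristic vectors, and that no two of them are merely different $G$-representatives of a single orbit, is the delicate point; the remainder is routine linear algebra over $\mathbb{F}_2$.
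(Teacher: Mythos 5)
First, a point of comparison: the paper itself offers no proof of this statement; it is imported verbatim from \cite{GP18} (Theorem 3.5 there), whose strategy --- normalize a basis so that $a_4$ is nuclear (the paper invokes this as Lemma 3.4 of \cite{GP18}, which is why the last four coordinates $1000$ are suppressed), then classify the remaining characteristic-vector data under basis changes --- is essentially the reduction you describe in polarization language. Your reduction is sound as far as it goes: the associator form $T$ is a nonzero element of $\wedge^3 V^{*}\cong V$, hence lies in a single $\gl(4,2)$-orbit and has one-dimensional radical; its stabilizer is $\mathbb{F}_2^{3}\rtimes\gl(3,2)$ of order $1344$; and, granting Griess' existence--uniqueness theorem (which you correctly invoke for the ``lifting'' direction), isomorphism classes of nonassociative rank-$4$ code loops correspond to orbits of this stabilizer on the $2^{10}$ remaining parameters.

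The genuine gap is that the theorem's entire content is the explicit outcome of that orbit computation --- exactly sixteen orbits, with the sixteen stated representatives --- and your proposal stops precisely where this work begins. ``I would then enumerate the orbits,'' followed by a list of candidate invariants whose separating power you do not verify, is a plan, not a proof: you show neither that the sixteen listed vectors are pairwise inequivalent under the stabilizer, nor that each of the $1024$ admissible vectors is equivalent to one of them. There is also a concrete technical pitfall in your setup that would derail the enumeration if taken literally: when $T\neq 0$ the commutator function $C$ is \emph{not} bilinear (the identity $[xy,z]=[x,z][y,z](x,y,z)$ gives $C(x+y,z)=C(x,z)+C(y,z)+T(x,y,z)$) and $P$ is not linear (since $(xy)^2=x^2y^2[x,y]$), so the six values $\lambda_{i,j}$ and four values $\lambda_i$ do not transform as ``an alternating bilinear form'' and ``a linear functional'' under the stabilizer; the action on the ten parameters is twisted by these polarization corrections. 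An orbit count performed with the naive linear action on such pairs gives the wrong answer, so the actual computation --- which is the proof --- must be carried out with the twisted action, and it is missing.
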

	\newpage
	\subsection{Half-isomorphism of code loops}
	Recall that a \emph{half-automorphism} of a loop $L$ is a bijection $f:L\to L$ such that $f(xy)\in\{f(x)f(y),f(y)f(x)\}$, for all $x,y\in L$. A half-automorphism of a loop $L$ is called \emph{trivial} if it is either an automorphism or an anti-automorphism. One can search for properties of half-isomorphisms of loops in \cite{GA19, GA21}.
	
	A half-isomorphism $f:L\to L'$ between two loops is called \emph{special} if its inverse $f^{-1}:L'\to L$ is also a half-isomorphism. Every half-isomorphism between two diassociative loops is special, (Corollary 3.6, \cite{G21-2}). Now, we show that if there is a half-isomorphism between two code loops, then they are isomorphic.
	
	\begin{lemma}\label{Lm:Center-P0}
		Suppose that $P_0=\langle a,b,c\rangle$ is a $3-$generated subgroup of a code loop $P$. Then either $P_0$ is abelian or $P_0$ has center of order $4$.
	\end{lemma}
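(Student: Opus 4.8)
The key observation is that $P_0$ is a genuine \emph{group}, so every associator formed from its elements is trivial; the only obstructions to commutativity come from commutators and squares, and in a code loop these are confined to the central subloop $\{1,-1\}$. Concretely, $P_0^2\subseteq L^2=\{1,-1\}$ and $[P_0,P_0]\subseteq [L,L]=\{1,-1\}$, with $\{1,-1\}\le Z(P)$. Hence $P_0$ is nilpotent of class at most $2$ with all commutators central, and $x^2\in\{1,-1\}$ for every $x\in P_0$. In particular the quotient $V:=P_0/\{1,-1\}$ is an elementary abelian $2$-group, i.e.\ a vector space over $\mathbb{F}_2$, spanned by the images of $a,b,c$, so $\dim_{\mathbb{F}_2}V\le 3$. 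I would record that, in the nonabelian case, the Frattini subgroup satisfies $\Phi(P_0)=P_0^2[P_0,P_0]=\{1,-1\}$, so $V$ is exactly the Frattini quotient; by the Burnside basis theorem a minimal generating set of size three then forces $\dim_{\mathbb{F}_2}V=3$.

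If $[P_0,P_0]=\{1\}$ then $P_0$ is abelian and there is nothing to prove, so assume $P_0$ is nonabelian. Then $[P_0,P_0]$ is a nontrivial subgroup of $\{1,-1\}$, forcing $[P_0,P_0]=\{1,-1\}$. The plan is to read the center off the commutator pairing. Define $\beta\colon V\times V\to\mathbb{F}_2$ by $\beta(\bar x,\bar y)=\varepsilon$, where $[x,y]=(-1)^{\varepsilon}$. First I would check that $\beta$ is well defined: since $-1$ is central, replacing $x$ by $-x$ (or $y$ by $-y$) leaves $[x,y]$ unchanged, so $\beta$ depends only on the classes $\bar x,\bar y$. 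Next, using the commutator identities recorded in Section~\ref{Sc:Preliminaires} together with the vanishing of all associators in the group $P_0$, the map $x\mapsto[x,z]$ is a homomorphism into $\{1,-1\}$; this yields bilinearity of $\beta$, and $[x,x]=1$ gives that $\beta$ is alternating.

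Finally I would identify the radical of $\beta$ with the center: by definition $\bar x$ lies in the radical exactly when $[x,y]=1$ for all $y\in P_0$, i.e.\ exactly when $x\in Z(P_0)$, so $Z(P_0)/\{1,-1\}=\operatorname{rad}\beta$ and $|Z(P_0)|=2\cdot|\operatorname{rad}\beta|$. Now linear algebra finishes the argument: an alternating bilinear form over $\mathbb{F}_2$ has even rank, and on the three-dimensional space $V$ a nonzero such form (nonzero because $P_0$ is nonabelian) must therefore have rank exactly $2$. Hence its radical has dimension $3-2=1$, so $|\operatorname{rad}\beta|=2$ and $|Z(P_0)|=4$, as claimed. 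The only delicate points are the verification that $\beta$ is a well-defined alternating $\mathbb{F}_2$-bilinear form---where the code-loop identities and the centrality of $-1$ are used---and the reduction guaranteeing $\dim_{\mathbb{F}_2}V=3$; once these are in place, the even-rank property of alternating forms does the rest.
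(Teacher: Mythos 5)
Your proof is correct, but it takes a genuinely different route from the paper. The paper argues by explicit generator manipulation: assuming $[a,b]=-1$, it uses the identity $[xy,z]=[x,z][y,z](x,y,z)$ (with vanishing associators, since $P_0$ is a group) to replace $c$ by $bc$ and $a$ by $ac$ as needed, arriving at a generating triple in which the third generator $c$ commutes with the other two; then $Z(P_0)=\langle c,-1\rangle$ has order $4$, and the paper even records its isomorphism type ($\mathbb{Z}_4$ or $\mathbb{Z}_2\times\mathbb{Z}_2$ according to $c^2$), which is what the application in Proposition \ref{Pr:code-loops-isomorphic} exploits (an explicit central element $c\neq\pm 1$). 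You instead pass to the Frattini quotient $V=P_0/\{1,-1\}$, observe that the commutator descends to a well-defined alternating $\mathbb{F}_2$-bilinear form $\beta$ on $V$ whose radical is $Z(P_0)/\{1,-1\}$, and invoke the even-rank property of alternating forms: a nonzero alternating form on a $3$-dimensional space has rank $2$, so the radical is a line and $|Z(P_0)|=4$. Your argument is more structural and generalizes at no extra cost (for a rank-$n$ such subgroup it gives $|Z(P_0)|=2^{n+1-\operatorname{rank}\beta}$ with $\operatorname{rank}\beta$ even), at the price of invoking the Burnside basis theorem, whereas the paper's computation is more elementary and produces the central element explicitly. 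One point in your favor: both proofs silently require that $\{a,b,c\}$ be a \emph{minimal} generating set (otherwise $P_0$ could be $D_8$ or $Q_8$, which are nonabelian with center of order $2$, so the lemma as literally stated would fail); you make this hypothesis explicit via the Burnside basis theorem, while in the paper it is hidden in the unstated fact that the modified generator $c$ is not $\pm 1$.
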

	\begin{proof}
		If $P_0$ is not abelian, let us assume that $ab\neq ba = -ab$. Then $\langle a,b\rangle$ is either the dihedral group $D_8$ or the quaternion group $Q_8$. Observe that $\langle a,b,c\rangle = \langle a, b, bc\rangle$ and if $[a,c]=-1$, then $[a,bc]=1$; therefore we may assume $[a,c]=1$. Similarly, $\langle a,b,c\rangle = \langle ac, b, c\rangle$ and if $[b,c]=-1$, then $[b,ac]=[ac,c]=1$; therefore, we also may assume $[b,c]=1$. Then $\langle c,-1\rangle\leq Z(P_0)$ and a simple calculation shows that $Z(P_0)=\langle c,-1\rangle$. Then, if $c^2=1$, then $Z(P_0)\cong\mathbb{Z}_2\times\mathbb{Z}_2$ and if $c^2=-1$, then $Z(P_0)\cong\mathbb{Z}_4$.
	\end{proof}
	
	Given a loop $L$, $C(L)=\{c\in L; cx=xc,\,\mbox{for all}\, x\in L\}$ is called \emph{commutant} of $L$.
	
	\begin{lemma}\label{Lm:commutant}
		Let $P$ and $Q$ be diassociative loops. If $f:P\to Q$ is a half-isomorphism, then $f(C(P))=C(Q)$.
	\end{lemma}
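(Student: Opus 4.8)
The plan is to reduce everything to Scott's theorem on groups \cite{Scott} by localising $f$ to two-generated subloops. Since $P$ and $Q$ are diassociative, $f$ is special (Corollary 3.6 of \cite{G21-2}), so $f^{-1}\colon Q\to P$ is again a half-isomorphism; by the symmetry $f\leftrightarrow f^{-1}$ it therefore suffices to prove the single inclusion $f(C(P))\subseteq C(Q)$, and the reverse inclusion $C(Q)\subseteq f(C(P))$ will follow by applying the same argument to $f^{-1}$.

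The key step is that $f$ carries each two-generated subloop onto a two-generated subloop. First I would record the routine facts that $f(1)=1$ and $f(x^{-1})=f(x)^{-1}$. Then, for fixed $x,y\in P$, an induction on word length shows $f(\langle x,y\rangle)\subseteq\langle f(x),f(y)\rangle$: if $f(g_1),f(g_2)\in\langle f(x),f(y)\rangle$, then $f(g_1g_2)\in\{f(g_1)f(g_2),f(g_2)f(g_1)\}\subseteq\langle f(x),f(y)\rangle$. Running the same induction for the half-isomorphism $f^{-1}$ gives $f^{-1}(\langle f(x),f(y)\rangle)\subseteq\langle x,y\rangle$, and combining the two inclusions yields the equality $f(\langle x,y\rangle)=\langle f(x),f(y)\rangle$. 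By diassociativity both $\langle x,y\rangle$ and $\langle f(x),f(y)\rangle$ are groups, so the restriction $f|_{\langle x,y\rangle}$ is a bijective half-isomorphism between two groups; by Scott's theorem it is an isomorphism or an anti-isomorphism.

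From here the conclusion is immediate. If $x$ and $y$ commute, then $\langle x,y\rangle$ is an abelian group, and since both an isomorphism and an anti-isomorphism send an abelian group to an abelian group, $\langle f(x),f(y)\rangle$ is abelian; hence $f(x)$ and $f(y)$ commute. Thus $f$ preserves commuting pairs. Now take $c\in C(P)$. For every $y\in P$ the elements $c,y$ commute, so $f(c),f(y)$ commute; as $f$ is onto, $f(c)$ commutes with every element of $Q$, i.e.\ $f(c)\in C(Q)$. This proves $f(C(P))\subseteq C(Q)$, and the reverse inclusion follows by the same reasoning applied to $f^{-1}$, giving $f(C(P))=C(Q)$.

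I expect the only real obstacle to be the localisation step: verifying that $f$ maps $\langle x,y\rangle$ exactly onto $\langle f(x),f(y)\rangle$ rather than merely into it. This is precisely where specialness of $f$—guaranteed by diassociativity—is essential, since without $f^{-1}$ being a half-isomorphism one cannot close up the image to the full two-generated subgroup and invoke Scott's theorem.
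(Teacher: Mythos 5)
Your proof is correct, but it takes a more self-contained route than the paper. The paper disposes of this lemma in two lines: it invokes Lemma 2.1 of \cite{KSV}, which already asserts that a half-isomorphism of diassociative loops carries commuting pairs to commuting pairs, combines this with the fact that $f^{-1}$ is again a half-isomorphism (specialness, exactly as you use it), and so obtains that $xy=yx$ if and only if $f(x)f(y)=f(y)f(x)$, from which $f(C(P))=C(Q)$ follows by surjectivity. What you do differently is re-derive that cited lemma from first principles: you localise $f$ to the two-generated subloops $\langle x,y\rangle$, prove by word-length induction (using both $f$ and $f^{-1}$) that $f(\langle x,y\rangle)=\langle f(x),f(y)\rangle$, observe that by diassociativity both are groups, and then apply Scott's theorem to conclude that the restriction is an isomorphism or anti-isomorphism, whence commutation is preserved. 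This is essentially the argument underlying the KSV lemma itself, so your version buys self-containedness --- it rests only on Scott's 1957 theorem and the specialness result of \cite{G21-2} --- at the cost of length, while the paper's version buys brevity by outsourcing the commutation-preservation step to \cite{KSV}. Both proofs share the same skeleton: preservation of commutation in both directions, plus surjectivity of $f$, and your localisation step (including the verification that the image is the full two-generated subgroup, not merely contained in it) is carried out correctly.
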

	\begin{proof}
		By Lemma 2.1 of \cite{KSV}, and since $f^{-1}$ is also an half-isomporphism, we have if $xy=yx$ if and only if $f(x)f(y) = f(y)f(x)$ and the result follows.
	\end{proof}
	\begin{proposition}\label{Pr:code-loops-isomorphic}
		Let $P$ and $Q$ be finite code loops and let $f:P\to Q$ be a half-isomorphism. Then $P$ is isomorphic to $Q$.
	\end{proposition}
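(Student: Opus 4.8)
The plan is to show that $f$ preserves each of the three ingredients recorded by a characteristic vector—the squares, the commutators, and the associators of basis elements—and then to read off an isomorphism by invoking the classification: a bijection between bases of two code loops that preserves the characteristic vector extends to an isomorphism. Note that both $P$ and $Q$ are nonassociative, since a code loop has a genuine nontrivial square and hence $A(L)=\{1,-1\}\neq\{1\}$; in particular each carries its own central element $-1$.

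First I would record the easy preservation statements. Since $f(x\cdot x)=f(x)f(x)$ no matter which factor order is selected, $f$ preserves squares: $f(x^2)=f(x)^2$. From $f(1)=f(1)^2$ and cancellation we get $f(1)=1$; and since $P^2=\{1,-1\}$ there is $x$ with $x^2=-1$, whence $f(-1)=f(x)^2\neq 1=f(1)$ forces $f(-1)=-1$. Thus $f$ fixes $\{1,-1\}$ and is multiplicative there. Lemma \ref{Lm:commutant}, in the sharp form established in its proof ($xy=yx$ if and only if $f(x)f(y)=f(y)f(x)$), then yields $[f(x),f(y)]=f([x,y])$ for all $x,y$, because commutators lie in $\{1,-1\}$.

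The heart of the matter is associator preservation, $(f(x),f(y),f(z))=f((x,y,z))$, and this is the step I expect to be the main obstacle, since the argument must absorb the order ambiguity intrinsic to a half-isomorphism. Here I would use the code-loop identity $[xy,z]=[x,z][y,z](x,y,z)$, valid in both $P$ and $Q$. The first point to settle is that $[f(xy),f(z)]=[f(x)f(y),f(z)]$: when $f(xy)=f(x)f(y)$ this is immediate, and when $f(xy)=f(y)f(x)$ the two products differ by the central factor $[f(x),f(y)]\in\{1,-1\}$, which drops out of the commutator. Granting this, I expand $[f(x)f(y),f(z)]$ in two ways: by the identity in $Q$ it equals $[f(x),f(z)][f(y),f(z)](f(x),f(y),f(z))$, while rewriting it as $f([xy,z])$ and pushing $f$ through the same identity in $P$ (using preservation of squares and commutators together with multiplicativity on $\{1,-1\}$) gives $[f(x),f(z)][f(y),f(z)]\,f((x,y,z))$. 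Cancelling the common commutator factors yields the claim. The conceptual point is that the order ambiguity always costs a central square, which is invisible both to commutators and to the reconstruction of associators.

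Finally I would fix a basis $\mathcal{B}=\{a_1,\dots,a_n\}$ of $P$. Because $f$ and $f^{-1}$ carry subloops to subloops, $f(\mathcal{B})$ generates $Q$, and comparing ranks in both directions shows it is again a basis of $Q$. The three preservation statements now give $f(a_i)^2=(-1)^{\lambda_i}$, $[f(a_i),f(a_j)]=(-1)^{\lambda_{i,j}}$ and $(f(a_i),f(a_j),f(a_k))=(-1)^{\lambda_{i,j,k}}$, so $f(\mathcal{B})$ has exactly the characteristic vector of $\mathcal{B}$. Hence the map $a_i\mapsto f(a_i)$ extends to an isomorphism and $P\cong Q$. (Alternatively, Lemma \ref{Lm:Center-P0} could be used to pin down the structure of each three-generated subgroup and its image, but the identity-based route above keeps the associator bookkeeping transparent.)
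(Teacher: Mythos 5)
Your proposal is correct in substance, and its core step---preservation of associators---follows a genuinely different route from the paper's. The paper proves $f((x,y,z))=(f(x),f(y),f(z))$ by a case analysis on whether $P_0=\langle x,y,z\rangle$ and $Q_0=\langle f(x),f(y),f(z)\rangle$ are associative: the mixed case is ruled out by combining Lemma \ref{Lm:Center-P0} (a nonabelian $3$-generated subgroup of a code loop has center of order $4$) with Lemma \ref{Lm:commutant} (half-isomorphisms of diassociative loops preserve commutants), which produces an element $f(c)\in C(Q_0)$ with $f(c)\neq \pm 1_Q$ that is then shown to lie in $Z(Q_0)$, contradicting $Z(Q_0)=\{1_Q,-1_Q\}$. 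You instead run everything through the polarization identity $[xy,z]=[x,z][y,z](x,y,z)$ (stated in the Preliminaries with a typo, $[x,y]$ in place of $[x,z]$; you use the correct form, which is also the one used inside the paper's own proof): the ambiguity $f(xy)\in\{f(x)f(y),f(y)f(x)\}$ costs only a central factor $[f(y),f(x)]\in\{1,-1\}$, which is invisible to commutators, so $[f(x)f(y),f(z)]=[f(xy),f(z)]=f([xy,z])$; expanding the two ends of this chain via the identity in $Q$ and in $P$ respectively and cancelling the common factors $[f(x),f(z)][f(y),f(z)]$ gives associator preservation outright. Your argument is shorter, purely computational, and makes Lemma \ref{Lm:Center-P0} unnecessary; the paper's argument avoids the polarization identity but pays with the case analysis and the auxiliary structural lemma. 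The remaining steps (squares, $f(\pm1)=\pm1$, commutators via the biconditional in the proof of Lemma \ref{Lm:commutant}, and passing from equality of characteristic vectors on a basis to an isomorphism) coincide with the paper's.

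Two blemishes, neither fatal. First, your opening claim that $P$ and $Q$ are necessarily nonassociative is false: $Q_8$ and $D_8$ are associative code loops, and a unique nontrivial square does not force $A(L)=\{1,-1\}$ (the paper asserts that equality only for nonassociative code loops). Fortunately you never actually use nonassociativity afterwards---every subsequent step is valid verbatim in an associative code loop---so this is a wrong remark rather than a logical gap. Second, the paper's proof also treats the degenerate case in which every element of $P$ squares to $1$ (then $P$, and hence $Q$, is an elementary abelian $2$-group and the conclusion is immediate); your appeal to ``there is $x$ with $x^2=-1$'' silently excludes this case. Under the paper's literal definition of a code loop the case is vacuous, but since the authors admit it, you should dispose of it in one line, e.g.\ by noting that a half-isomorphism preserves orders of elements.
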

	\begin{proof}
		We shall prove that $P$ and $Q$ have the same characteristic vector. If $x^2=1_P$ for all $x\in P$, then $P$ is an abelian group and, since $x$ and $f(x)$ have the same order, $Q$ is also an abelian group, and then $P\cong Q$. If $x^2=-1_P$, for some $x\in P$, we get $f(-1_P)=f(x^2)=f(x)^2=-1_Q$, because code loops have a unique nontrivial square. For $x,y\in P$, we get $f([x,y])=[f(x),f(y)]$, because for (special) half-isomorphism between code loops, $xy=yx$ if and only if $f(x)f(y)=f(y)f(x)$. Let $x$, $y$ and $z\in P$. We wish to prove that $f(x,y,z)=(f(x),f(y),f(z))$. If both $P_0=\langle x,y,z\rangle$ and $Q_0=\langle f(x),f(y),f(z)\rangle$ are groups, then $f(x,y,z)=f(1_P)=1_Q=(f(x),f(y),f(z))$. Similarly, if both $P_0$ and $Q_0$ are nonassociative, then $f(x,y,z)=f(-1_P)=-1_Q=(f(x),f(y),f(z))$. Suppose that $P_0$ is a group and $Q_0$ is nonassociative. Recall that, since $Q_0$ is a code loop of rank $3$, $Z(Q_0)=\{1_Q,-1_Q\}$. By Lemmas \ref{Lm:Center-P0} and \ref{Lm:commutant}, there exists $c\in Z(P_0)$, with $c\neq\pm 1_P$ and $f(c)\in C(Q_0)$. Then for every $u$ and $v\in Q_0$, we get $1_Q=[f(c),uv]=[f(c),u][f(c),v](f(c),u,v)$, then $f(c)\in Z(Q_0)$, which is a contradiction. Then $f(x,y,z)=(f(x),f(y),f(z))$, for all $x$, $y$ and $z\in P$. Finally, if $\{a_1,\ldots,a_n\}$ is a basis for $P$, then $\{f(a_1),\ldots,f(a_n)\}$ is a basis for $Q$ and they have the same related characteristic vector. Hence, $P\cong Q$.
	\end{proof}

	It is known that every half-automorphism of a finite loop is special and if $xy=yx$, then $f(xy)=f(x)f(y)$. Also, if $L$ is a code loop, every half-automorphism of $L$ preserves the unique nontrivial square of $L$. More precisely:
	
	\begin{lemma}[\cite{BP}, Lemma 2.9]\label{Lm:auto-ger}
		Let $L$ be a code loop and let $f$ be a half-automorphism of $L$.
		\begin{itemize}
			\item[(i)] $f(-1)=-1$.
			\item[(ii)] $f((x,y,z))=(f(x),f(y),f(z))$ for every $x,y,z\in L$.
			\item[(iii)] $[f(-x),f(y)]=[-f(x),f(y)]=[f(x),-f(y)]=[f(x),f(y)]=[x,y]$ for every $x,y\in L$.
		\end{itemize}
	\end{lemma}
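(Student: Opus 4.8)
The plan is to establish the three items in turn, relying on the two facts recalled immediately before the statement --- that $f$ is special and that $f(uv)=f(u)f(v)$ whenever $uv=vu$ (hence $f(u^2)=f(u)^2$ for every $u$) --- together with the structural facts that in a code loop $L^2=\{1,-1\}$ and that $-1$ is central.

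First I would prove (i). Applying the defining property of $f$ to the product $1\cdot 1$ gives $f(1)=f(1)^2$, and since the only idempotent of a loop is its identity, $f(1)=1$. As $-1$ is a nontrivial square, pick $x$ with $x^2=-1$; then $f(-1)=f(x^2)=f(x)^2$, which again lies in $L^2=\{1,-1\}$. Because $f$ is injective and $f(1)=1$, the value $f(-1)=1$ is impossible, forcing $f(-1)=-1$.

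For (ii) I would pass to the subloop $P_0=\langle x,y,z\rangle$. Since $f$ carries each product of $x,y,z$ to a product of their images and $f^{-1}$ is again a half-automorphism, $f$ restricts to a half-isomorphism of $P_0$ onto $Q_0=\langle f(x),f(y),f(z)\rangle$. Associators of a code loop lie in $\{1,-1\}$, so $(x,y,z)$ equals $1$ or $-1$ according as $P_0$ is or is not a group, and likewise for $(f(x),f(y),f(z))$ and $Q_0$; using (i), both sides of the desired identity agree whenever $P_0$ and $Q_0$ are simultaneously associative or simultaneously nonassociative. The whole content therefore lies in excluding the mixed case, and here I would reproduce the argument of Proposition \ref{Pr:code-loops-isomorphic}: if $P_0$ were a group while $Q_0$ were a nonassociative rank-$3$ code loop (so $Z(Q_0)=\{1,-1\}$), then Lemmas \ref{Lm:Center-P0} and \ref{Lm:commutant} produce a central $c\in Z(P_0)$ with $c\neq\pm1$ and $f(c)\in C(Q_0)$; expanding $[f(c),uv]$ by the commutator identity and using that $f(c)$ commutes with all of $Q_0$ shows $(f(c),u,v)=1$ for all $u,v$, whence $f(c)\in Z(Q_0)$, contradicting $f(c)\neq\pm1$. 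This mixed case is the one genuine obstacle; everything else is formal.

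Finally, for (iii), I would note that $-1$ central makes $(-1)$ and $x$ commute, so $f(-x)=f((-1)x)=f(-1)f(x)=-f(x)$ by (i), which is precisely the first equality $[f(-x),f(y)]=[-f(x),f(y)]$. Centrality of $-1$ also forces $[-1,a]=1$ and $(-1,a,b)=1$, so the identity $[uv,w]=[u,v][v,w](u,v,w)$ with $u=-1$ gives $[-a,b]=[a,b]$, and similarly $[a,-b]=[a,b]$; this collapses the three middle commutators to $[f(x),f(y)]$. The last equality $[f(x),f(y)]=[x,y]$ then follows from the preservation of commutativity ($xy=yx\iff f(x)f(y)=f(y)f(x)$) together with the fact that a commutator takes only the central values $1$ and $-1$.
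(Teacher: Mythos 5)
Your proposal is correct. The paper itself gives no proof of this lemma (it is quoted from \cite{BP}, Lemma 2.9), but your argument coincides with the one the paper does give for the closely related Proposition \ref{Pr:code-loops-isomorphic}: item (i) is the same unique-nontrivial-square computation, item (ii) reduces to the mixed associative/nonassociative case, which you settle exactly as the paper does via Lemmas \ref{Lm:Center-P0} and \ref{Lm:commutant}, and item (iii) follows from centrality of $-1$ together with preservation of commuting pairs.
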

	\begin{corollary}\label{Cr:same-char-vec}
		Let $L$ be a code loop with fixed basis $\mathcal{B}=\{a_1,a_2,\ldots,a_n\}$ and let $f$ be a half-automorphism of $L$. Then $\{f(a_1),f(a_2),\ldots,f(a_n)\}$ is a basis of $L$ with the same related characteristic vector.
	\end{corollary}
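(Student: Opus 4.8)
The plan is to verify the two assertions separately: first that $\{f(a_1),\dots,f(a_n)\}$ is a basis, and then that its characteristic vector coincides with that of $\mathcal{B}$.

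For the characteristic vector, everything reduces to Lemma \ref{Lm:auto-ger} together with a few elementary facts about $f$. First I would record that $f(1)=1$ (the identity is the unique idempotent, since $f(1)=f(1\cdot 1)=f(1)f(1)$) and that, because $-1$ is central, $f(-x)=f((-1)x)=f(-1)f(x)=-f(x)$ by Lemma \ref{Lm:auto-ger}(i). The three blocks of the vector are then handled directly. For the squares, since $a_i$ commutes with itself, $f(a_i)^2=f(a_i^2)=f((-1)^{\lambda_i})=(-1)^{\lambda_i}=a_i^2$, using $f(1)=1$ and $f(-1)=-1$. For the commutators, $[f(a_i),f(a_j)]=[a_i,a_j]$ is exactly Lemma \ref{Lm:auto-ger}(iii). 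For the associators, Lemma \ref{Lm:auto-ger}(ii) gives $(f(a_i),f(a_j),f(a_k))=f((a_i,a_j,a_k))=f((-1)^{\lambda_{i,j,k}})=(-1)^{\lambda_{i,j,k}}=(a_i,a_j,a_k)$. Hence the two bases carry the same characteristic vector.

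The substantive step is to show that $f(\mathcal{B})=\{f(a_1),\dots,f(a_n)\}$ is itself a basis. Since $f$ is a bijection, $f(\mathcal{B})$ has exactly $n$ elements, so it suffices to prove that it generates $L$; minimality is then automatic because $L$ has rank $n$ and therefore admits no generating set of fewer than $n$ elements. To prove generation I would use that $f$ is special (every half-automorphism of a finite loop is special) and that a special half-automorphism carries subloops to subloops: if $H\leq L$ and $u=f(x),v=f(y)$ with $x,y\in H$, then $\{f(xy),f(yx)\}=\{uv,vu\}\subseteq f(H)$ when $xy\neq yx$, while $f(xy)=f(x)f(y)=uv$ when $xy=yx$; thus $f(H)$ is a nonempty subset of the finite loop $L$ that contains $1$ and is closed under multiplication, hence a subloop. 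Applying this to the half-automorphism $f^{-1}$ and to $M=\langle f(a_1),\dots,f(a_n)\rangle$, the set $f^{-1}(M)$ is a subloop containing every $a_i$, so $f^{-1}(M)\supseteq\langle a_1,\dots,a_n\rangle=L$; therefore $M=f(L)=L$, as required.

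I expect the main obstacle to be precisely this generation step, since the characteristic-vector computations are immediate from Lemma \ref{Lm:auto-ger}. The delicate point is that a half-automorphism need not send $f(x)f(y)$ into $f(H)$ on the nose — it only controls $f(xy)$ up to swapping factors — so the argument must exploit both $f(xy)$ and $f(yx)$ (and the injectivity of $f$) to pin down that $f(H)$ is closed, after which finiteness upgrades closure to being a subloop. An alternative route is to observe that $f(-x)=-f(x)$ makes $f$ descend to a bijection $\bar f$ of $L/\{1,-1\}\cong\mathbb{Z}_2^{\,n}$ which, because that quotient is abelian, is a linear automorphism; then $\overline{f(a_1)},\dots,\overline{f(a_n)}$ is a basis of $\mathbb{Z}_2^{\,n}$, giving generation modulo the central element $-1$ and confirming the rank lower bound. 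I would favour the subloop argument, as it avoids the minor bookkeeping needed to recover $-1\in M$ in the second approach.
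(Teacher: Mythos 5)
Your proof is correct, and for the substantive half --- showing that $f(\mathcal{B})$ generates $L$ --- it takes a genuinely different route from the paper. The paper argues directly from the normal form of elements: every $x\in L$ is a signed left-normed product of a subset of $\mathcal{B}$, and since $f(-1)=-1$ lies in $A(L)=[L,L]\leq Z(L)$ (so all the signs produced by swapping factors are central), pushing $f$ through such a product shows that $f(x)$ is the corresponding signed product of the $f(a_i)$; bijectivity of $f$ then exhibits every element of $L$ in this form, whence $f(\mathcal{B})$ is a basis. You instead isolate a general lemma --- the image of a subloop under a special half-automorphism of a finite loop is again a subloop, proved by playing $f(xy)$ against $f(yx)$ and using injectivity --- and apply it to $f^{-1}$ and $M=\langle f(a_1),\ldots,f(a_n)\rangle$ to get $f^{-1}(M)\supseteq\langle\mathcal{B}\rangle=L$, hence $M=L$. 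This buys two things: it uses nothing about code loops beyond finiteness and speciality (so it would work verbatim in any finite loop), and it quietly disposes of the question of whether $-1\in\langle f(\mathcal{B})\rangle$, a point the paper's one-line conclusion actually needs (there it follows from the equality of characteristic vectors, since some square, commutator or associator of basis elements must equal $-1$ in order for $\mathcal{B}$ to generate $L$ at all). What the paper's computation buys in exchange is the explicit description of $f$ on each $a_\sigma$, which is precisely the normal-form information reused in Section \ref{Sc:HA} to set up $H_\mathcal{B}(L)$ and Theorem \ref{Th:Structure}. The characteristic-vector half of your argument coincides with the paper's (both are immediate from Lemma \ref{Lm:auto-ger}, yours just spells out the squares via the rule $xy=yx\Rightarrow f(xy)=f(x)f(y)$). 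The one point you gloss --- that no generating set of $L$ has fewer than $n$ elements, so that your $n$-element generating set is automatically minimal --- is the standard fact that generating sets project onto generating sets of $L/\{\pm 1\}\cong\mathbb{Z}_2^n$; you note this yourself in your alternative sketch, and the paper takes it for granted as well.
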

	\begin{proof}
		If $x=\pm(\ldots(a_1a_2)a_3)\ldots a_{n-1})a_n$ is an element of $L$ then, since $f(-1)=-1\in A(L)=[L,L]\leq Z(L)$, we get $f(x)=\pm(\ldots(f(a_1)f(a_2))f(a_3))\ldots f(a_{n-1}))f(a_n)$. Thus, $f(\mathcal{B})=\{f(a_1),f(a_2),\ldots,f(a_n)\}$ is a basis for $L$. Also, by Lemma \ref{Lm:auto-ger}, the characteristic vectors related to $\mathcal{B}$ and $f(\mathcal{B})$ are the same.
	\end{proof}
	
	%%%%%%%%%%%%%%%%
	\section{The structure of $\half(L)$}\label{Sc:HA}
	
	For a loop $L$ we denote by $\half(L)$ and by $\aut(L)$ the groups of half-automorphisms and automorphisms of $L$, respectively. Let $L$ be a code loop of rank $n$ with fixed basis $\mathcal{B}=\{a_1,\ldots,a_n\}$ and let $P_n$ be the set of subsets of $I_n=\{1,\ldots,n\}$. We define $a_\emptyset=1$, and for an ordered subset $\sigma=\{i_1,\ldots,i_k\}\in P_n$, we define $a_\sigma\in L$ by $a_\sigma=a_\mu\cdot a_{i_k}$, where $\mu=\{1,\ldots,i_{k-1}\}$. For example, $a_1a_4\cdot a_5$ is written as $a_\sigma$, where $\sigma=\{1,4,5\}$ and, for $\mu=\{1,3,4,5\}$, $a_\mu=(a_1a_3\cdot a_4)a_5$. Since $L^2=[L,L]=(L,L,L)=\{1,-1\}\leq Z(L)$, we have $L=\{\pm\, a_\sigma\,:\,\sigma\in P_n\}$. If $a_\sigma$ and $a_\mu\in L$, there is $\lambda(\sigma,\mu)\in\{1,-1\}$ such that
	\begin{equation}\label{product}
		a_\sigma\cdot a_\mu = \lambda(\sigma,\mu)a_{\sigma\Delta\mu},
	\end{equation}
	in which $\sigma\Delta\mu$ denotes the symmetric difference of $\sigma$ and $\mu$. Denote $\{\sigma,\mu\}\in\{1,-1\}$ the commutator $[a_\sigma,a_\mu]$.
	
	Define $H_\mathcal{B}(L)$ by the set of all half-automorphisms, $\varphi$ of $L$ such that $\varphi(a_\sigma)=\varepsilon_\sigma a_\sigma$, where $\varepsilon_\sigma\in\{1,-1\}$, $\epsilon_{\{j\}}=1$, for all $j\in I_n$. Actually, $H_\mathcal{B}(L)$ is the set of all half-automorphisms of $L$ that act as identity on the fixed basis $\mathcal{B}$ and it is easy to see that $H_\mathcal{B}(L)$ is a subgroup of $\half(L)$. On the other hand, let $R_\mathcal{B}$ be the set of all functions $\Psi:P_n\to\{-1,1\}$ satisfying $\Psi(\emptyset) = 1$ $, \Psi(\{j\}) = 1$, for every $j\in I_n$ and $\Psi(\sigma\Delta\mu)=\Psi(\sigma)\Psi(\mu)$, whenever $\{\sigma,\mu\}=1$. For any $\Psi\in R$, define $\hat{\Psi}:L\to L$ by $\hat{\Psi}(a_\sigma)=\Psi(\sigma)a_\sigma$ and $\hat{\Psi}(-a_\sigma)=-\Psi(\sigma)a_\sigma$.
	\begin{proposition}\label{Pr:RBHB}
		If $\Psi$ is an element of $R_\mathcal{B}$, then $\hat{\Psi}$ is a half-automorphism in $H_\mathcal{B}(L)$. Conversely, if $\varphi$ is a half-automorphism in $H_\mathcal{B}(L)$, then there exists a function $\Psi$ in $R_\mathcal{B}$ such that $\varphi = \hat{\Psi}$.
	\end{proposition}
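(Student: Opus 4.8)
The plan is to verify both implications by direct computation with the structure constants from~\eqref{product}, using throughout that $-1$ is central and that, in a code loop, reversing a product changes it only by the commutator factor: $a_\mu a_\sigma = \{\sigma,\mu\}\,a_\sigma a_\mu$, so that by~\eqref{product} one has $\lambda(\mu,\sigma)=\{\sigma,\mu\}\lambda(\sigma,\mu)$. Getting the bookkeeping of the two sign families $\lambda(\cdot,\cdot)$ and $\{\cdot,\cdot\}$ straight is really the only thing that needs care; once this relation is in hand, both directions unwind mechanically.

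For the first implication I would begin by checking that $\hat\Psi$ is well defined and bijective: every element of $L$ is uniquely $\pm a_\sigma$, and $\hat\Psi$ either fixes both $a_\sigma,-a_\sigma$ or interchanges them according to the sign $\Psi(\sigma)$, so it is an involution on each fibre $\{\pm a_\sigma\}$ and hence a bijection; moreover $\hat\Psi(a_j)=\Psi(\{j\})a_j=a_j$, so $\hat\Psi$ fixes $\mathcal B$. To see that it is a half-automorphism, write $x=\epsilon_1 a_\sigma$, $y=\epsilon_2 a_\mu$ with $\epsilon_i\in\{1,-1\}$. Since $-1$ is central, a short calculation gives $\hat\Psi(xy)=\epsilon_1\epsilon_2\lambda(\sigma,\mu)\Psi(\sigma\Delta\mu)\,a_{\sigma\Delta\mu}$, while $\hat\Psi(x)\hat\Psi(y)=\epsilon_1\epsilon_2\Psi(\sigma)\Psi(\mu)\lambda(\sigma,\mu)\,a_{\sigma\Delta\mu}$ and $\hat\Psi(y)\hat\Psi(x)=\{\sigma,\mu\}\,\hat\Psi(x)\hat\Psi(y)$.

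The key observation, and really the whole content of this implication, is the split into two cases. If $\{\sigma,\mu\}=-1$, the two candidate products $\hat\Psi(x)\hat\Psi(y)$ and $\hat\Psi(y)\hat\Psi(x)$ are negatives of one another, hence form exactly the pair $\{v,-v\}$ with $v=\epsilon_1\epsilon_2\Psi(\sigma)\Psi(\mu)\lambda(\sigma,\mu)\,a_{\sigma\Delta\mu}$; since $\hat\Psi(xy)=\pm v$ regardless of the value of $\Psi(\sigma\Delta\mu)$, the half-automorphism condition holds automatically and imposes no constraint on $\Psi$. If instead $\{\sigma,\mu\}=1$, the two products coincide, and the condition $\hat\Psi(xy)=\hat\Psi(x)\hat\Psi(y)$ reduces precisely to $\Psi(\sigma\Delta\mu)=\Psi(\sigma)\Psi(\mu)$, which is exactly the defining relation of $R_\mathcal B$ available in this case. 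Thus $\hat\Psi$ is a half-automorphism fixing $\mathcal B$, i.e.\ $\hat\Psi\in H_\mathcal B(L)$.

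For the converse, given $\varphi\in H_\mathcal B(L)$ with $\varphi(a_\sigma)=\varepsilon_\sigma a_\sigma$, I would set $\Psi(\sigma):=\varepsilon_\sigma$ and verify the three axioms of $R_\mathcal B$. The conditions $\Psi(\emptyset)=1$ and $\Psi(\{j\})=1$ follow from $\varphi(1)=1$ (the only idempotent of a loop) and from the definition of $H_\mathcal B(L)$. For the multiplicative relation I would use the fact, recalled in the excerpt, that a half-automorphism preserves products of commuting elements: if $\{\sigma,\mu\}=1$ then $a_\sigma a_\mu=a_\mu a_\sigma$, whence $\varphi(a_\sigma a_\mu)=\varphi(a_\sigma)\varphi(a_\mu)$; expanding both sides by~\eqref{product} and cancelling the common central factor $\lambda(\sigma,\mu)$ yields $\varepsilon_{\sigma\Delta\mu}=\varepsilon_\sigma\varepsilon_\mu$, so $\Psi\in R_\mathcal B$. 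Finally $\varphi=\hat\Psi$: they agree on each $a_\sigma$ by construction, and on $-a_\sigma$ because $-1$ is central and $\varphi(-1)=-1$ by Lemma~\ref{Lm:auto-ger}, giving $\varphi(-a_\sigma)=-\varepsilon_\sigma a_\sigma=\hat\Psi(-a_\sigma)$. The main obstacle is thus not conceptual but the careful tracking of the sign factors under reordering; the noncommuting case, where the half-automorphism freedom absorbs the ambiguity, is the point that makes the correspondence with $R_\mathcal B$ exact.
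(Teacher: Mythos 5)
Your proof is correct and takes essentially the same approach as the paper's: the identical structure-constant computation of $\hat\Psi(a_\sigma a_\mu)$, $\hat\Psi(a_\sigma)\hat\Psi(a_\mu)$, $\hat\Psi(a_\mu)\hat\Psi(a_\sigma)$ with the case split on $\{\sigma,\mu\}=\pm 1$ for the forward direction, and the same use of the fact that half-automorphisms preserve products of commuting elements for the converse. The only differences are cosmetic: you make explicit the sign bookkeeping for general elements $\pm a_\sigma$, the bijectivity of $\hat\Psi$, and the verification $\varphi(-a_\sigma)=-\varepsilon_\sigma a_\sigma$, all of which the paper leaves implicit.
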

	\begin{proof}
		
		Suppose $\Psi\in R_\mathcal{B}$ and let us prove that $\hat{\Psi}$ is a half-automorphism in $H_\mathcal{B}(L)$. First, by the definition of $\hat{\Psi}$, we get $\hat\Psi(1)=1$ and $\hat\Psi(a_j)=a_j$ for every $j\in I_n$. Let $a_\sigma$ and $a_\mu$ be elements of $L$. Observe that $\hat{\Psi}(a_\sigma a_\mu)=\hat{\Psi}(\lambda(\sigma,\mu)a_{\sigma\Delta\mu})=\lambda(\sigma,\mu)\Psi(\sigma\Delta\mu)a_{\sigma\Delta\mu}$, $\hat{\Psi}(a_\sigma)\hat{\Psi}(a_\mu)=\Psi(\sigma)a_\sigma\cdot\Psi(\mu)a_\mu = \lambda(\sigma,\mu)\Psi(\sigma)\Psi(\mu)\cdot a_{\sigma\Delta\mu}$ and, on the other direction, $\hat{\Psi}(a_\mu)\hat{\Psi}(a_\sigma)=\Psi(\mu)a_\mu\cdot\Psi(\sigma)a_\sigma = \lambda(\mu,\sigma)\Psi(\mu)\Psi(\sigma)\cdot a_{\sigma\Delta\mu}$. If $\{\sigma,\mu\}=1$, we have $\Psi(\sigma\Delta\mu)=\Psi(\sigma)\Psi(\mu)$ and then $\hat{\Psi}(a_\sigma a_\mu) = \hat{\Psi}(a_\sigma)\hat{\Psi}(a_\mu)$. If $\{\sigma,\mu\}=-1$, we get $\lambda(\mu,\sigma)=-\lambda(\sigma,\mu)$ and then, if $\Psi(\sigma\Delta\mu)=\Psi(\sigma)\Psi(\mu)$ we have $\hat{\Psi}(a_\sigma a_\mu)=\hat{\Psi}(a_\sigma)\hat{\Psi}(a_\mu)$, and if $\Psi(\sigma\Delta\mu)=-\Psi(\sigma)\Psi(\mu)$ we have $\hat{\Psi}(a_\sigma a_\mu)=\hat{\Psi}(a_\mu)\hat{\Psi}(a_\sigma)$. Therefore, $\hat{\Psi}$ is a half-automorphism in $H_\mathcal{B}(L)$.\\
		Conversely, assume $\varphi\in H_\mathcal{B}(L)$ and define, $\Psi(\emptyset)=1$ and $\Psi(\sigma)=\epsilon_\sigma$ where $\varphi(a_\sigma)=\epsilon_\sigma a_\sigma$ for $\sigma\in P_n\backslash\{\emptyset\}$. We shall prove that $\Psi\in R_\mathcal{B}$. By definition, $\Psi(\emptyset)=1$ and since $\epsilon_{\{j\}}=1$, for all $j\in I_n$ we get $\Psi(\{j\})=1$, for all $j\in I_n$. If $\{\sigma,\mu\}=1$, then $\varphi(a_\sigma a_\mu)=\varphi(a_\sigma)\varphi(a_\mu)$ and $\epsilon_{\sigma\Delta\mu}=\epsilon_\sigma\epsilon_\mu$. Then $\Psi(\sigma\Delta\mu)=\Psi(\sigma)\Psi(\mu)$ if $\{\sigma,\mu\}=1$. It is clear that $\varphi=\hat\Psi$. 
	\end{proof}
	The set $W$ of all mappings from $P_n$ to $\{1,-1\}$ is an $\mathbb{F}_2$ vector space of dimension $2^{2^n}$. The set $R_\mathcal{B}$ is an $\mathbb{F}_2$ subspace of $W$. In fact, for $\Psi_1$ and $\Psi_2\in R_\mathcal{B}$, $\Psi_1(\emptyset)\Psi_2(\emptyset)=1$, $\Psi_1(\{j\})\Psi_2(\{j\})=1$ for every $j\in I_n$ and $\Psi_1(\sigma\Delta\mu)\Psi_2(\sigma\Delta\mu)=\Psi_1(\sigma)\Psi_2(\sigma)\cdot\Psi_1(\mu)\Psi_2(\mu)$, whenever $\{\sigma,\mu\}=1$. 
	\begin{lemma}
		The group $H_\mathcal{B}(L)$ is an elementary abelian group.
	\end{lemma}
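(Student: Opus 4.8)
The plan is to transport the $\mathbb{F}_2$-vector-space structure of $R_\mathcal{B}$, just recorded above, to $H_\mathcal{B}(L)$ through the bijection $\Psi\mapsto\hat\Psi$ furnished by Proposition \ref{Pr:RBHB}. Since $R_\mathcal{B}$ is an $\mathbb{F}_2$-subspace of $W$, under pointwise multiplication it is already an elementary abelian $2$-group: the operation is commutative and every element satisfies $\Psi^2=\mathbf{1}$, where $\mathbf{1}$ is the constant function $1$. If I can upgrade the bijection $\Psi\mapsto\hat\Psi$ to an isomorphism of groups, where $R_\mathcal{B}$ carries pointwise multiplication and $H_\mathcal{B}(L)$ carries composition, then $H_\mathcal{B}(L)$ inherits being elementary abelian and the lemma follows at once.

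The heart of the argument is therefore the single computation showing that composition on the $H_\mathcal{B}(L)$ side matches pointwise multiplication on the $R_\mathcal{B}$ side. Given $\Psi_1,\Psi_2\in R_\mathcal{B}$, I would evaluate $\hat\Psi_1\circ\hat\Psi_2$ on a generator $a_\sigma$: first $\hat\Psi_2(a_\sigma)=\Psi_2(\sigma)a_\sigma$, and then $\hat\Psi_1$ is applied to $\Psi_2(\sigma)a_\sigma$. Here one must split according to the sign $\Psi_2(\sigma)\in\{1,-1\}$ and use the two defining clauses $\hat\Psi_1(a_\sigma)=\Psi_1(\sigma)a_\sigma$ and $\hat\Psi_1(-a_\sigma)=-\Psi_1(\sigma)a_\sigma$. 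In both cases the outcome is $\Psi_1(\sigma)\Psi_2(\sigma)a_\sigma$, so that $(\hat\Psi_1\circ\hat\Psi_2)(a_\sigma)=(\Psi_1\Psi_2)(\sigma)a_\sigma$; since $-1$ is central and fixed, the same relation propagates to the elements $-a_\sigma$, giving $\hat\Psi_1\circ\hat\Psi_2=\widehat{\Psi_1\Psi_2}$.

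Because $\Psi\mapsto\hat\Psi$ is a bijection by Proposition \ref{Pr:RBHB} and this identity shows it carries pointwise multiplication to composition, it is a group isomorphism from $(R_\mathcal{B},\cdot)$ onto $(H_\mathcal{B}(L),\circ)$; commutativity and $\Psi^2=\mathbf{1}$ then transfer, so $H_\mathcal{B}(L)$ is elementary abelian. The only delicate point, and the step I would watch most carefully, is the sign bookkeeping when $\hat\Psi_1$ is applied to $-a_\sigma$: it is precisely the clause $\hat\Psi(-a_\sigma)=-\Psi(\sigma)a_\sigma$ in the definition of $\hat\Psi$ that keeps the composition symmetric rather than acquiring a spurious sign. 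Everything else is formal once the homomorphism identity is in hand, and no further appeal to the code-loop axioms is needed beyond the centrality of $-1$ already used to define $\hat\Psi$.
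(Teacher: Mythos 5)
Your proposal is correct and follows essentially the same route as the paper: both prove that $\Psi\mapsto\hat\Psi$ is a group isomorphism from $(R_\mathcal{B},\cdot)$ onto $(H_\mathcal{B}(L),\circ)$ and transfer the elementary abelian $2$-group structure of the $\mathbb{F}_2$-subspace $R_\mathcal{B}$. Your explicit case-split on the sign $\Psi_2(\sigma)$ when computing $\hat\Psi_1(\Psi_2(\sigma)a_\sigma)$ just makes rigorous the step the paper writes as $\hat\Psi(\Phi(\sigma)a_\sigma)=\Phi(\sigma)\hat\Psi(a_\sigma)$, so the two arguments coincide.
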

	\begin{proof}
		It suffices to prove that $\Psi\in R\mapsto\hat{\Psi}\in H(L)$ is an isomorphism of groups. If $\Psi$ and $\Phi\in R_\mathcal{B}$, for every $a_\sigma\in L$,
		\begin{equation}\nonumber
			(\hat{\Psi}\circ\hat{\Phi})(a_\sigma)=\hat{\Psi}(\hat{\Phi}(a_\sigma)) = \hat{\Psi}(\Phi(\sigma)a_\sigma) = \Phi(\sigma)\hat{\Psi}(a_\sigma) = \Phi(\sigma)\Psi(\sigma)a_\sigma = \Psi(\sigma)\Phi(\sigma)a_\sigma.
		\end{equation}
		Then, we conclude that $\Psi\cdot\Phi\in R_\mathcal{B}\mapsto\hat{\Psi}\circ\hat{\Phi}\in H_\mathcal{B}(L)$. Finally, we have $\hat{\Psi} = \hat{\Phi}$ if and only if $\hat{\Psi}(a_\sigma) = \hat{\Phi}(a_\sigma)$ for every $\sigma\in P_n$, which is the same as $\Psi(\sigma)=\Phi(\sigma)$ and then $\Psi=\Phi$. Then $H_\mathcal{B}(L)$ is group isomorphic to $R_\mathcal{B}$, thus $H_\mathcal{B}(L)$ is an elementary abelian group.
	\end{proof}
	\begin{remark}\label{Rm:intersection}
		Since the only automorphism of $L$ that fixes all the elements of a basis is the identity of $L$, it is easy to see that, for every basis $\mathcal{B}$, $\aut(L)\cap H_\mathcal{B}(L)=\{Id\}$.
	\end{remark}
	
	\begin{theorem}\label{Th:Structure}
		Let $L$ be a code loop with fixed basis $\mathcal{B}=\{a_1,\ldots,a_n\}$. Consider $H_\mathcal{B}(L)$ the set of all half-automorphisms $\varphi$ of $L$ such that $\varphi(a_i)=a_i$, for every $i=1,\ldots,n$. Then, for every half-automorphism $f$ of $L$, there are unique $h\in\aut(L)$ and $\varphi\in H_\mathcal{B}(L)$ such that $f=  h\varphi$. In particular, $\half(L)=\aut(L)\cdot H_\mathcal{B}(L)$.
	\end{theorem}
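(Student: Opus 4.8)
The plan is to reduce the entire statement to the construction of a single automorphism. Given a half-automorphism $f$ of $L$, set $b_i=f(a_i)$. By Corollary \ref{Cr:same-char-vec}, $\mathcal{B}'=\{b_1,\ldots,b_n\}$ is again a basis of $L$ and it has the same characteristic vector as $\mathcal{B}$. If I can produce an automorphism $h\in\aut(L)$ with $h(a_i)=b_i$ for every $i$, then I set $\varphi=h^{-1}f$. The composition of the automorphism $h^{-1}$ with the half-automorphism $f$ is again a half-automorphism (apply $h^{-1}$ to each of $f(x)f(y)$ and $f(y)f(x)$ and use that $h^{-1}$ preserves products), and $\varphi(a_i)=h^{-1}(f(a_i))=h^{-1}(b_i)=a_i$, so $\varphi\in H_\mathcal{B}(L)$ and $f=h\varphi$. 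Hence the existence of the factorization is equivalent to the existence of $h$.

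To build $h$ I would use the canonical word description $L=\{\pm\,a_\sigma:\sigma\in P_n\}$. Let $b_\sigma$ be the element of $L$ obtained from the same word as $a_\sigma$ with each $a_i$ replaced by $b_i$; since $\mathcal{B}'$ is a basis, $L=\{\pm\,b_\sigma\}$ as well, and $\sigma\mapsto b_\sigma$ is a bijection of $P_n$ onto a transversal of $\{1,-1\}$ in $L$. Define $h(\pm a_\sigma)=\pm b_\sigma$. This is a well-defined bijection of $L$ fixing $-1$ and restricting to $a_i\mapsto b_i$ on the basis. It remains to check that $h$ is multiplicative. Writing $a_\sigma a_\mu=\lambda(\sigma,\mu)a_{\sigma\Delta\mu}$ and $b_\sigma b_\mu=\lambda'(\sigma,\mu)b_{\sigma\Delta\mu}$ as in \eqref{product}, multiplicativity of $h$ is exactly the assertion that the two factor sets coincide: $\lambda(\sigma,\mu)=\lambda'(\sigma,\mu)$ for all $\sigma,\mu\in P_n$.

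The crux, and the main obstacle, is therefore to show that the factor set of a code loop written in canonical word form is completely determined by its characteristic vector. I would prove this by a straightening argument: any product $a_\sigma\cdot a_\mu$ is brought to canonical form $\pm a_{\sigma\Delta\mu}$ by a sequence of elementary moves, namely transposing adjacent factors, re-associating a nested triple, and cancelling a repeated generator $a_ia_i=(-1)^{\lambda_i}$; and each such move multiplies the expression by a sign which, by the code loop identities for commutators, squares and associators recalled in Section \ref{Sc:Preliminaires}, is an explicit function of $\lambda_i,\lambda_{i,j},\lambda_{i,j,k}$ alone. Diassociativity and Moufang's theorem guarantee that the accumulated sign does not depend on the chosen sequence of moves, so $\lambda(\sigma,\mu)$ depends only on the characteristic vector. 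Since $\mathcal{B}$ and $\mathcal{B}'$ share the characteristic vector, the same computation yields $\lambda=\lambda'$, and hence $h$ is an automorphism. Alternatively, one may invoke the fact, underlying the classifications in \cite{GP18,Rose}, that a code loop is reconstructed from its characteristic vector together with the canonical word convention, which gives the same conclusion.

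Finally, uniqueness is immediate from Remark \ref{Rm:intersection}. If $f=h_1\varphi_1=h_2\varphi_2$ with $h_i\in\aut(L)$ and $\varphi_i\in H_\mathcal{B}(L)$, then $h_2^{-1}h_1=\varphi_2\varphi_1^{-1}$ lies in $\aut(L)\cap H_\mathcal{B}(L)=\{Id\}$ (using that $H_\mathcal{B}(L)$ is a subgroup of $\half(L)$), whence $h_1=h_2$ and $\varphi_1=\varphi_2$. Thus every half-automorphism factors uniquely as $h\varphi$, giving $\half(L)=\aut(L)\cdot H_\mathcal{B}(L)$.
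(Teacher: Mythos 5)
Your proof is correct, and its skeleton is exactly the paper's: both use Corollary \ref{Cr:same-char-vec} to see that $\{b_1,\ldots,b_n\}$ is a basis with the same characteristic vector as $\mathcal{B}$, both then produce $h\in\aut(L)$ with $h(a_i)=b_i$ and set $\varphi=h^{-1}f$, and both deduce uniqueness from $\aut(L)\cap H_\mathcal{B}(L)=\{Id\}$ (Remark \ref{Rm:intersection}). The genuine difference is the middle step. The paper disposes of it with a citation: Lemma 3.2 of \cite{BP} asserts precisely that two bases with the same characteristic vector are related by a unique automorphism. You instead prove this transport lemma yourself, defining $h(\pm a_\sigma)=\pm b_\sigma$ on canonical words and reducing multiplicativity of $h$ to the claim that the factor set $\lambda(\sigma,\mu)$ of \eqref{product} is a universal function of the characteristic vector. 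That claim is true, and your straightening mechanism is the standard way to see it: by the code-loop identities recalled in Section \ref{Sc:Preliminaires} (trilinearity of the associator, the expansion of $[xy,z]$ through commutators and the associator, and $(xy)^2=x^2y^2[x,y]$), every square, commutator and associator of canonical words expands recursively into characteristic-vector entries, so the sign contributed by each transposition, re-association or cancellation depends only on $\lambda$. One correction: you do not need Moufang's theorem or diassociativity for independence of the accumulated sign from the chosen sequence of moves; that independence is automatic, since each move is an identity of $L$, so every sequence computes the same well-defined element $a_\sigma a_\mu=\lambda(\sigma,\mu)a_{\sigma\Delta\mu}$ --- the only thing requiring proof is that \emph{some} sequence expresses this sign through $\lambda$ alone. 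What your route buys is self-containedness: it in effect re-proves the cited lemma of \cite{BP}, by the same mechanism that underlies Proposition \ref{Pr:code-loops-isomorphic} and the classifications in \cite{GP18,Rose}. What the paper's route buys is brevity; and note that, as written, your straightening argument is a sketch whose recursion would have to be carried out in detail to count as a complete proof.
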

	\begin{proof}
		Let $f$ be a half-automorphism of $L$. If $f(a_i)=b_i$, then, by Corollary \ref{Cr:same-char-vec}, $\{b_1,\ldots,b_n\}$ is a basis of $L$ corresponding the same characteristic vector as $\mathcal{B}$. By Lemma 3.2 \cite{BP}, there is a unique $g\in\aut(L)$ such that $g(b_i)=a_i$, for every $i=1,\ldots,n$. Then $\varphi=gf$ is a half-automorphism in $H_\mathcal{B}(L)$. Setting $h=g^{-1}$, we get $f=h\varphi$. Now, if $f=h\varphi = h_1\varphi_1$, where $h,h_1\in\aut(L)$ and $\varphi,\varphi_1\in H_\mathcal{B}(L)$, then $\varphi\varphi_1^{-1}=h^{-1}h_1\in\aut(L)\cap H_\mathcal{B}(L)$ and then $\varphi = \varphi_1$ and $h=h_1$, by Remark \ref{Rm:intersection}.
	\end{proof}
	\begin{corollary}
		The group of half-automorphisms of a code loop $L$ is a product of the automorphism group of $L$ by an elementary abelian $2-$group.
	\end{corollary}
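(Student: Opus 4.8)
The plan is to obtain this statement directly from the two structural results already in hand, since it is essentially a repackaging of them. First I would fix an arbitrary basis $\mathcal{B}=\{a_1,\ldots,a_n\}$ of $L$. Theorem \ref{Th:Structure} then gives the set-theoretic factorization $\half(L)=\aut(L)\cdot H_\mathcal{B}(L)$, and moreover tells us that each half-automorphism $f$ is written uniquely as $f=h\varphi$ with $h\in\aut(L)$ and $\varphi\in H_\mathcal{B}(L)$. So the whole group $\half(L)$ is exhausted by products of an automorphism with an element of $H_\mathcal{B}(L)$, and I would record this as the first half of the claim.

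For the second half I would identify $H_\mathcal{B}(L)$ as the required elementary abelian $2$-group. The preceding Lemma shows that $\Psi\mapsto\hat{\Psi}$ is an isomorphism $R_\mathcal{B}\to H_\mathcal{B}(L)$, and $R_\mathcal{B}$ was exhibited as an $\mathbb{F}_2$-subspace of the $\mathbb{F}_2$-vector space $W$ of all maps $P_n\to\{1,-1\}$. Hence every nonidentity element of $H_\mathcal{B}(L)$ has order $2$, which is exactly the assertion that $H_\mathcal{B}(L)$ is an elementary abelian $2$-group. I would also invoke Remark \ref{Rm:intersection}, namely $\aut(L)\cap H_\mathcal{B}(L)=\{\mathrm{Id}\}$, to confirm that the factor $H_\mathcal{B}(L)$ is genuinely a complement to $\aut(L)$ rather than overlapping it, so that $\half(L)$ is honestly the product ``by'' this $2$-group as stated.

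The only point requiring care, and the mild obstacle here, is what ``product'' should mean: the content delivered by Theorem \ref{Th:Structure} is an internal product of two subgroups with trivial intersection and with unique factorization of every element, which is weaker than a (semi)direct product since no normality of $H_\mathcal{B}(L)$ is being claimed. I would therefore phrase the conclusion as $\half(L)=\aut(L)\cdot H_\mathcal{B}(L)$ with $\aut(L)\cap H_\mathcal{B}(L)=\{\mathrm{Id}\}$ and $H_\mathcal{B}(L)$ elementary abelian of exponent $2$, which is precisely the meaning intended by the statement. Finally, since the isomorphism type of $\aut(L)$ does not depend on the basis and $H_\mathcal{B}(L)$ is elementary abelian for every choice of $\mathcal{B}$, the description is independent of the auxiliary basis chosen, so nothing is lost by fixing one at the outset.
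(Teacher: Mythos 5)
Your proof is correct and matches the paper's (implicit) reasoning: the corollary follows immediately from Theorem \ref{Th:Structure} together with the lemma identifying $H_\mathcal{B}(L)\cong R_\mathcal{B}$ as an elementary abelian $2$-group, with Remark \ref{Rm:intersection} guaranteeing the trivial intersection. Your clarification that ``product'' means an internal product of subgroups with trivial intersection and unique factorization, not a semidirect product, is exactly the intended reading.
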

	\begin{corollary}
		The structure of the group $H_\mathcal{B}(L)$ does not depend on $\mathcal{B}$.
	\end{corollary}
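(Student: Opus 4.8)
The plan is to exploit the unique factorization established in Theorem \ref{Th:Structure} to show that $|H_\mathcal{B}(L)|$ is the same for every basis $\mathcal{B}$, and then invoke the fact that finite elementary abelian $2$-groups are classified by their order. First I would observe that the uniqueness clause of Theorem \ref{Th:Structure} says precisely that the map $\aut(L)\times H_\mathcal{B}(L)\to\half(L)$ sending $(h,\varphi)\mapsto h\varphi$ is a bijection: surjectivity is the existence of the factorization $f=h\varphi$, and injectivity is the uniqueness of $h$ and $\varphi$. Consequently $|\half(L)|=|\aut(L)|\cdot|H_\mathcal{B}(L)|$, and therefore
\begin{equation}\nonumber
	|H_\mathcal{B}(L)|=\frac{|\half(L)|}{|\aut(L)|}.
\end{equation}
Since neither $\half(L)$ nor $\aut(L)$ depends on the choice of basis, the order $|H_\mathcal{B}(L)|$ is independent of $\mathcal{B}$.

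Next I would recall from the preceding Lemma that every $H_\mathcal{B}(L)$ is an elementary abelian $2$-group, hence isomorphic to $(\mathbb{Z}_2)^k$ for some $k$, and such a group is determined up to isomorphism by its order alone. Combining this with the cardinality computation, for any two bases $\mathcal{B}$ and $\mathcal{B}'$ we have $|H_\mathcal{B}(L)|=|H_{\mathcal{B}'}(L)|$, whence $H_\mathcal{B}(L)\cong H_{\mathcal{B}'}(L)$. This is exactly the assertion that the structure of $H_\mathcal{B}(L)$ does not depend on $\mathcal{B}$.

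I do not expect any genuine obstacle here, since all the substance is already carried by Theorem \ref{Th:Structure} and the elementary abelian Lemma. The one point that deserves a moment's care is ensuring that the factorization gives a set-theoretic bijection rather than a mere surjection, so that the cardinalities multiply cleanly; this is guaranteed by the uniqueness part of Theorem \ref{Th:Structure} together with $\aut(L)\cap H_\mathcal{B}(L)=\{Id\}$ from Remark \ref{Rm:intersection}. It is worth noting that the argument yields equality of orders, and hence isomorphism, but does not produce a canonical isomorphism between $H_\mathcal{B}(L)$ and $H_{\mathcal{B}'}(L)$; for the present statement, concerning only the abstract group structure, this is all that is required.
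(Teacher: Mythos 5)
Your proof is correct and follows essentially the same route as the paper: both derive $|\half(L)|=|\aut(L)|\cdot|H_\mathcal{B}(L)|$ from the unique factorization in Theorem \ref{Th:Structure}, conclude that $|H_\mathcal{B}(L)|$ is independent of the basis, and then invoke the classification of finite elementary abelian $2$-groups by order. Your write-up merely makes the bijection $\aut(L)\times H_\mathcal{B}(L)\to\half(L)$ explicit, which the paper leaves implicit.
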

	\begin{proof}
		For two bases $\mathcal{B}$ and $\mathcal{B}_1$ of $L$, we have $|\aut(L)|.|H_\mathcal{B}(L)|=|\aut(L)|.|H_{\mathcal{B}_1}(L)|$ and then, $|H_\mathcal{B}(L)|=|H_{\mathcal{B}_1}(L)|$. Since both groups are elementary abelian $2-$groups, we have $H_\mathcal{B}(L)$ isomorphic to $H_{\mathcal{B}_1}(L)$.
	\end{proof}
	\begin{remark}
		The trivial half-automorphism $J: x\mapsto x^{-1}$ belongs to the group $H_\mathcal{B}(L)$ if and only if every element of $B$ has order $2$. In the case $J\notin H_\mathcal{B}(L)$, there is a $\varphi\in H_\mathcal{B}(L)$, anti-automorphism, such that $J\varphi^{-1}$ is an automorphism of $L$. In particular, $L$ possesses a nontrivial half-automorphism if and only if $|H_\mathcal{B}(L)|>2$.
	\end{remark}
	% \textcolor{red}{Rose, veja se esse texto abaixo fica bom para você depois digitar o algoritmo do Grichkov}\\
	% \textcolor{red}{Aqui começa o algoritmo - Se a gente for deixar o algoritmo como seção, acho que esse texto ficaria melhor como parte inicial de seção}
	%%%%%%%%%%%%%%%%%%%%%

	\subsection{Computing the Subspace \( R_\mathcal{B} \)} \label{algorithm}
	By Proposition \ref{Pr:RBHB} and Theorem \ref{Th:Structure}, in order to compute the number of all half-automorphisms of a given code loop of rank $n$ given a fixed basis $\mathcal{B}$, it suffices to compute the subspace $R_\mathcal{B}$ and it can be done by solving a linear system obtained by the following algorithm: Given a code loop $L$ of rank \( n \), with a fixed basis $\mathcal{B}$, and a characteristic vector 
	\[
	\lambda = (\lambda_1,\ldots,\lambda_n,\lambda_{1,2},\ldots,\lambda_{n-1,n},\lambda_{1,2,3},\ldots,\lambda_{n-2,n-1,n}),
	\]
	where $\lambda_i$, $\lambda_{i,j}$, $\lambda_{i,j,k}\in\{0,1\}$, with $i<j<k$.
	
	\begin{enumerate}[label={\bfseries Step \arabic*.}, leftmargin=*]
		\item Define the product structure:
		\begin{itemize}
			\item For each \( \sigma \subseteq P_n \), define \( a_\sigma = a_{\sigma \setminus \{i\}} \cdot a_i \), where \( i = \max \sigma \).
			\item Compute the product: \( a_\sigma \cdot a_\mu = \lambda(\sigma, \mu) \cdot a_{\sigma \triangle \mu} \), where $a_\sigma \in L$.
			\item Compute the commutator: \( \{\sigma, \mu\}\).
		\end{itemize}
		
		\item Define an order on the set \( P_n = \{\sigma \subseteq I_n\} \) by:
		\begin{itemize}
			\item \( \sigma < \mu \) if \( |\sigma| < |\mu| \);
			\item if \( |\sigma| = |\mu| \), then \( \sigma < \mu \) if \( \max(\sigma) < \max(\mu) \);
			\item if \( |\sigma| = |\mu| \) and \( \max(\sigma) = \max(\mu) \), then compare \( \sigma \setminus \{i\} \) and \( \mu \setminus \{i\} \), with \( i = \max(\sigma) \).
		\end{itemize}
		
		\item Define a linear system over \( \mathbb{F}_2 = \{1,-1\} \), with multiplicative notation:
		\[
		\varepsilon_{\sigma \triangle \mu} = \varepsilon_\sigma \cdot \varepsilon_\mu \quad \text{whenever } \{\sigma, \mu\} = 1.
		\]
		
		\item Initialize with the values:
		\[
		\varepsilon_{\emptyset} = \varepsilon_1 = \dots = \varepsilon_n = 1.
		\]
		
		% \item For each \( \sigma \in P_n \) with \( |\sigma| \geq 2 \), define \( \varepsilon_\sigma \) inductively, assuming \( \varepsilon_\mu \) is known or undefined for all \( \mu < \sigma \). Let \( \{\lambda \in P_n \mid \lambda < \sigma \} = \{\sigma_1 < \dots < \sigma_m\} \). For \( k = 1,\ldots, m \), if \( \sigma_k \triangle \sigma > \sigma \), skip to \( \sigma_{k+1} \). If this holds for all \( \sigma_k \), proceed to the next \( \sigma \).
		
		\item Compute the sets:
		% $$
		% \widehat{A}_\sigma = \{ (\sigma_i, \sigma_j) \mid \sigma_i \triangle \sigma_j = \sigma,\ \sigma_i, \sigma_j < \sigma \}\,\,\mbox{and}\,\,   A_\sigma = \{ (\sigma_i, \sigma_j) \in \widehat{A}_\sigma \mid \{\sigma_i, \sigma_j\} = 1 \}.
		% $$
		\begin{align*}
			\widehat{A}_\sigma 
			&= \{ (\sigma_i, \sigma_j) \mid \sigma_i \triangle \sigma_j = \sigma,\ \sigma_i, \sigma_j < \sigma \}, \\
			A_\sigma 
			&= \{ (\sigma_i, \sigma_j) \in \widehat{A}_\sigma \mid \{ \sigma_i, \sigma_j \} = 1 \}.
		\end{align*}
		
		\item For $\sigma$ with $|\sigma|\geq2$, determine \( \varepsilon_\sigma \) inductively as the following:
		
		\begin{itemize}
			\item If \( A_\sigma = \emptyset \), declare \( \varepsilon_\sigma \) as an undefined variable.
			\item  If $(\sigma_i,\sigma_j)\in \mathcal{A}_\sigma$ set \( \varepsilon_\sigma = \varepsilon_{\sigma_i} \cdot \varepsilon_{\sigma_j} \). In case $(\sigma_r,\sigma_s)\in\mathcal{A}_\sigma$, we have a relation \( \varepsilon_{\sigma_i} \cdot \varepsilon_{\sigma_j} = \varepsilon_\sigma = \varepsilon_{\sigma_r} \cdot \varepsilon_{\sigma_s}\), and then we can delete repeated variables.
		\end{itemize}
		\item The system always admits a solution (taking $\varepsilon_\sigma = 1$, for all $\sigma\in P_n$), and the subspace $R_\mathcal{B}$ is its solution subspace.
	\end{enumerate}

	\section{Half-automorphisms of code loops of rank $3$ and $4$}\label{Sc:Rank3and4}
	
	In this section, we apply the results of the Section \ref{Sc:HA} in order to have a concrete look on the half-automorphism group of all code loops of rank $3$ and $4$. Given a code loop $L$, by Theorem \ref{Th:Structure},  $|\half(L)|=|H_\mathcal{B}(L)|\cdot|\aut(L)|$, for some basis $\mathcal{B}$. In order to compute $|\aut(L)|$, it is defined the  \emph{outer automorphism group of} $L$ as the quotient group $\out(L)=\aut(L)/N$, where $N=\{\sigma\in\aut(L);\sigma(x)=\pm x,\ \mbox{for all}\ x\in L\}$, \cite{Rose, GP18}. 
	Observe that, for all automorphisms $f$ and $g$ in $\aut(L)$, we have $fN=gN$ if and only if $g(x)=f(\pm x)=\pm f(x)$ for every $x\in L$, therefore, if $L$ is a code loop of rank $n$, $|\aut(L)|=2^n|\out(L)|$. Also, in Proposition 3.7 of \cite{GP18}, the structure of $\out(L)$ is described, where $L$ is a code loop of rank $4$.
	
	\subsection{Code loops of rank $3$} Let $L$ be a code loop of rank $3$ with a basis $\mathcal{B}=\{a_1,a_2,a_3\}$, and a characteristic vector chosen as Proposition 3.2 of \cite{GP18}. For $n=3$, we have the set $P_3 = \{\emptyset, \{1\},\{2\},\{3\},\{1,2\},\{1,3\},\{2,3\},\{1,2,3\}\}$ and we are searching for all possible functions $\Psi$ in the set $R_\mathcal{B}$ and then we can construct the corresponding half-automorphism $\hat{\Psi}$ as Proposition \ref{Pr:RBHB}.   We denote the function $\Psi$ as a list of its images and by the definition of $R_\mathcal{B}$ we have $\Psi=[1,1,1,1,\Psi(\{1,2\}),\Psi(\{1,3\}),\Psi(\{2,3\}),\Psi(\{1,2,3\})]$ and the other images depend on the commutator of the elements of $\mathcal{B}$.
	
	Let $\Psi \in R_\mathcal{B}$. Let us determine all possible values of $\Psi$ by applying the steps given by Algorithm established in Subsection \ref{algorithm}. For each $\sigma \in P_3$ such that $|\sigma| \geq 2$, we need to calculate the sets defined in Step $5$. Thus, we obtain: 
	
	\begin{enumerate}
		\item $\widehat{A}_{\{i,j\}} = \{ (\{i\}, \{j\})\}$, if $\{i,j\} = \{1,2\}$ or $\{i,j\} = \{1,3\};$
		\item $\widehat{A}_{\{2,3\}} = \{ (\{2\}, \{3\}), (\{1,2\}, \{1,3\})\};$
		\item $\widehat{A}_{\{1,2,3\}} = \{ (\{1\}, \{2,3\}), (\{2\}, \{1,3\}), (\{3\}, \{1,2\})\}.$
	\end{enumerate}
	
	Therefore, 
	
	\begin{enumerate}
		\item $\Psi(\{i,j\})=\Psi(\{i\}\triangle \{j\})$, if $\{i,j\} = \{1,2\}$ or $\{i,j\} = \{1,3\};$
		\item $\Psi(\{2,3\})=\Psi(\{2\}\triangle \{3\}) = \Psi(\{1,2\}\triangle \{1,3\})$;
		\item $\Psi(\{1,2,3\})=\Psi(\{1\}\triangle \{2,3\}) = \Psi(\{2\}\triangle \{1,3\}) = \Psi(\{3\}\triangle \{1,2\})$.
	\end{enumerate}
	
	To determine these images, we must compute the sets $A_\sigma$ by identifying which commutators $\{\sigma_1, \sigma_2\} = [a_{\sigma_1}, a_{\sigma_2}]$  are equal to $1$, for each pair $(\sigma_1,\sigma_2) \in \widehat{A}_\sigma$. We now compute these sets for each code loop of rank $3$.

	Let  $L=C_1^3$ be the code loop with characteristic vector $(1,1,1,1,1,1)$. Since $[a_1,a_2]=[a_1,a_3]=[a_2,a_3]=[a_1,a_2a_3]=[a_2,a_1a_3]=[a_3,a_1a_2]=-1$, we get $A_\sigma = \emptyset$, for all $\sigma \in P_3$, with $|\sigma| \geq 2$ and we obtain $4$ undefined variables, denoted by $x,y,z,t$. Then $\Psi = [1,1,1,1,x,y,z,t]$ 
	with $x,y,z,t\in\{1,-1\}$, and $|R_\mathcal{B}| = 16$. 
	According to Proposition \ref{Pr:RBHB}, all 16 possible $\hat{\Psi}$ are half-automorphisms of $L$. These results can also be observed in \cite{BP}, where it is pointed out that $|\half(L)|=21504$ and $|\aut(L)|=1344$. 
	
	Similarly, we can analyze the code loop $L=C_3^3$ with characteristic vector $(0,0,0,1,1,1)$. We also obtain $\Psi = [1,1,1,1,x,y,z,t]$ with $x,y,z,t\in\{1,-1\}$, and $|R_\mathcal{B}| = |H(L)| = 16$. As an example, the function $\Psi = [1,1,1,1,1,1,1,-1]$ corresponds to a half-automorphism $\hat{\Psi}$ such that $\hat{\Psi}(a_1a_2\cdot a_3) = -a_1a_2\cdot a_3$, $\hat{\Psi}(-a_1a_2\cdot a_3) = a_1a_2\cdot a_3$, and $\hat{\Psi}(b) = b$ otherwise. Since $a_1a_2\cdot a_3$ is an element of order $2$, $\hat{\Psi}$ is not an elementary mapping in the sense of Definition 2.10 of \cite{BP}. According to GAP computations, $\hat{\Psi}$ is a nontrivial half-automorphism that is not the product of an elementary mapping and an automorphism of $L$.
	We must point out that in Theorem 4.3 of \cite{BP} the authors proved that every half-automorphisms of $L$ is a composition of an automorphism of $L$ and an element of the group generated by all elementary mappings. This proof is incorrect and we realized that the mistake is to suppose that the set of generators $\{x,y,z\}$, in the notation of the proof of Theorem 4.3, could be associated to the characteristic vector $(000111)$.
	Actually, by Theorem \ref{Th:Structure} and since $|R_\mathcal{B}|=16$, we get $|\half(L)|=3072$, because $|\aut(L)|=192$.
	
	Now, let $L=C_2^3$ be the code loop with characteristic vector $(0,0,0,0,0,0)$. Since $[a_1,a_2]=[a_1,a_3]=[a_2,a_3]=1$ and $[a_1,a_2a_3]=[a_2,a_1a_3]=[a_3,a_1a_2]=-1$, we get $A_\sigma \neq \emptyset$, for all $\sigma \in \{\{1,2\},\{1,3\},\{2,3\}\} $, and $A_{\{1,2,3\}} = \emptyset$. In this case, $\Psi(\{1,2\}) = \Psi(\{1,3\}) = \Psi(\{2,3\}) = 1$ and, there exists only one undefined variable $\varepsilon_{\{1,2,3\}} = x$. Thus, $\Psi = [1,1,1,1,1,1,1,x]$ with $x\in\{1,-1\}$, and $|R_\mathcal{B}| = 2$. According to Proposition \ref{Pr:RBHB}, these 2 possible $\hat{\Psi}$ are half-automorphisms of $L$. This results can also be observed in \cite{BP}, where it is stated that $|\half(L)|=384$ and $|\aut(L)|=192$.
	
	For the code loops $L=C_4^3$ and 
	$L=C_5^3$, with characteristic vectors $(1,1,0,0,0,0)$ and  $(1,0,0,0,0,0)$, respectively, the analyses and results are the same as for $L=C_2^3$. We get  $\Psi = [1,1,1,1,1,1,1,x]$ with $x\in\{1,-1\}$, and $|R_\mathcal{B}| = |H(L)| = 2$. Moreover, these results can also be observed in \cite{BP}, where it is shown that 
	$|\half(C_4^3)|=128$, $|\aut(C_4^3)|=64$, $|\half(C_5^3)|=96$ and $|\aut(C_5^3)|=48$.

	%%%%%
	
	\subsection{Code loops of rank $4$} Let $L$ ba a code loop of rank $4$. We claim that $L$ possesses a code subloop of rank $3$, $L'$  with no nontrivial half-automorphism. In fact, one can consider $L'$ as a nonabelian subgroup and then, by Scott's result $|H(L')|=2$. Now, let $L'=\langle a_{i_1},a_{i_2},a_{i_3}\rangle$ and consider $\mathcal{B}=\{a_{i_1},a_{i_2},a_{i_3},a_{i_4}\}$ a basis of $L$. Consider the map $\theta:H_\mathcal{B}(L)\to H(L')$ defined by $\theta(\varphi)=\varphi|_{L'}$. Since $\varphi$ fixes all elements of $\mathcal{B}$, then $\theta(\varphi)$ is indeed a half-automorphism of $L'$ and, also $\theta$ is a homomorphism of groups. Therefore, $|H_\mathcal{B}(L)|=|H(L')||ker\theta|=2|\ker\theta|$, where $\ker\theta$ contains all $\varphi\in H_\mathcal{B}(L)$ such that $\varphi(a_\sigma)=a_\sigma$, for all $\sigma\in P_4$ with $i_4\notin\sigma$. Let us compute an example: Let $C^4_9$ be the code loop of rank $4$ with characteristic vector $(0100001000)$ of \cite{GP18}. This is the loop CodeLoop(32,11) in GAP Library. This loop has a basis $\mathcal{B}=\{a_1,a_2,a_3,a_4\}$ and $a_4$ is a nuclear element of $L$. We have that $L'=\langle a_1,a_3,a_4\rangle$ is a nonabelian group, then $|H(L')|=2$. If $\varphi$ is an element of $ker\theta$, then $\varphi=\hat{\Psi}$ and $\Psi(\sigma)=1$ for every $\sigma\in P_4$ with $2\notin\sigma$. As in the case of $n=3$, let us write $\Psi$ as $\Psi=[\Psi(\emptyset),\Psi(\{1\}),\Psi(\{2\}),\Psi(\{3\}),\Psi(\{4\}),\Psi(\{1,2\}),\Psi(\{1,3\}),\Psi(\{1,4\}),\Psi(\{2,3\}),\break\Psi(\{2,4\}),\Psi(\{3,4\}),\Psi(\{1,2,3\}),\Psi(\{1,2,4\}),\Psi(\{1,3,4\}),\Psi(\{2,3,4\}),\Psi(\{1,2,3,4\})]$.\break Then, $\Psi=[1,1,1,1,1,\Psi(\{1,2\}),1,1,\Psi(\{2,3\}),\Psi(\{2,4\}),1,\Psi(\{1,2,3\}),\Psi(\{1,2,4\}),1,\break\Psi(\{2,3,4\}),\Psi(\{1,2,3,4\})]$. Since $[a_1,a_2]=[a_2,a_3]=[a_2,a_4]=[a_2,a_1a_4]=[a_2,a_3a_4]=[a_1,a_2a_3\cdot a_4]=1$, we get $\Psi(\{1,2\})=\Psi(\{2,3\})=\Psi(\{2,4\})=\Psi(\{1,2,4\})=\Psi(\{2,3,4\})\break=\Psi(\{1,2,3,4\})=1$. Finally, since $[a_1a_2\cdot a_3,a_1a_4]=[a_1a_2\cdot a_3,a_1][a_1a_2\cdot a_3,a_4]=(-1)(-1)=1$, we get $\Psi(\{2,3,4\})=\Psi(\{1,2,3\})\Psi(\{1,4\})$. We get $ker\theta=\{\Psi\}$ where $\Psi=[1,1,\ldots,1]$, then $|H(L)|=2$ and $L$ has no nontrivial half-automorphisms. In the following table, we summarize the size of $|\half(L)|$ for all code loops of rank $4$, according to the characteristic vectors displayed in Theorem \ref{Th:G-Rose-posto4}. 
	
	\begin{table}[h!]
		\centering
		\renewcommand{\arraystretch}{1.2}
		\begin{tabular}{|c||c|c|c|c|c|}
			\hline 
			$i$ & $L = \cl(32,i)$ & $|\out(L)|$ & $|\aut(L)|$ & $|H(L)|$ & $|\half(L)|$ \\ \hline \hline
			1  & $C^{4}_2$   & 192   & 3072    & 2    & 6144    \\ \hline
			2  & $C^{4}_3$   & 192   & 3072    & 16    & 49152   \\ \hline
			3  & $C^{4}_{14}$   & 48     & 768     & 2     & 1536    \\ \hline
			4  & $C^{4}_7$   & 24    & 384     & 2     & 768     \\ \hline
			5  & $C^{4}_5$   & 48    & 768     & 2     & 1536    \\ \hline
			6  & $C^{4}_8$   & 24    & 384     & 2     & 768     \\ \hline
			7  & $C^{4}_{10}$ & 12   & 192     & 2     & 384     \\ \hline
			8  & $C^{4}_{16}$ & 8    & 128     & 2     & 256     \\ \hline
			9  & $C^{4}_{15}$ & 48   & 768     & 2     & 1536    \\ \hline
			10 & $C^{4}_1$   & 1344  & 21504   & 16  & 344064  \\ \hline
			11 & $C^{4}_9$   & 8     & 128     & 2     & 256     \\ \hline
			12 & $C^{4}_{12}$ & 8    & 128     & 2     & 256    \\ \hline
			13 & $C^{4}_4$   & 64     & 1024     & 2   & 2048     \\ \hline
			14 & $C^{4}_{13}$ & 24    & 384     & 2     & 768     \\ \hline
			15 & $C^{4}_6$   & 168    & 2688     & 16     & 43008     \\ \hline
			16 & $C^{4}_{11}$ & 12   & 192     & 2     & 384     \\ \hline
		\end{tabular}
		%\caption{}
	\end{table}

	\newpage
	\section{Elementary mappings}\label{Sc:elementary-map}
	In this section, we point out some facts about the so called elementary mappings. Despite those mappings play a very important role when a code loop of rank $3$ has a  nontrivial half-automorphism, we show that for code loops of rank at least $4$, no elementary mapping can be a half-automorphism. Recall the Definition 2.10 of \cite{BP}, for a diassociative loop $L$ and an element $c$ in $L$ the \emph{elementary mapping} $\tau_c:L\to L$ is defined by $\tau_c(x)=x^{-1}$ if $x\in\{c,c^{-1}\}$ and $\tau_c(x)=x$ otherwise.
	\begin{lemma}\label{lm:1}
		Let $L$ be a nonassociative code loop and let $x\in L$ be such that $x^2 = -1$. The following statements are equivalent:
		\begin{itemize}
			\item[(i)] $\tau_x$ is a half-automorphism;
			\item[(ii)] For any $y,z\in L$:
			\begin{itemize}
				\item[(ii.a)] $[x,y] = 1$ implies $y\in\subsp{x}$;
				\item[(ii.b)] $(x,y,z) = 1$ implies either $y\in\subsp{x,z}$ or $z\in\subsp{x,y}$
			\end{itemize}
			\item[(iii)] If $\lambda = (\lambda_1,\ldots,\lambda_n,\lambda_{12},\ldots,\lambda_{n-1,n},\lambda_{123},\ldots,\lambda_{n-2,n-1,n})$ is the characteristic vector related to the generating set $\{v_1,\ldots,v_n\}$ with $v_1=x$, then $\lambda_{1j}=\lambda_{1jk}=1$, for $1<j<k$.
		\end{itemize}
	\end{lemma}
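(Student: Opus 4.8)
The plan is to isolate, as a first step, the single combinatorial constraint that decides whether $\tau_x$ respects products, and then to run the cycle of implications off of it. Since $x^2=-1$ forces $x^{-1}=-x$, the map $\tau_x$ fixes every element of $L$ and merely interchanges $x$ and $-x$. Writing $\tau_x(w)=\epsilon(w)\,w$ with $\epsilon(w)=-1$ exactly when $w\in\{x,-x\}$, and recalling that $-1$ is central and that $vu=[u,v]\,uv$ with $[u,v]\in\{1,-1\}$, the condition $\tau_x(uv)\in\{\tau_x(u)\tau_x(v),\tau_x(v)\tau_x(u)\}$ collapses, after cancelling $uv$, to
\[
\epsilon(uv)\in\{\epsilon(u)\epsilon(v),\ [u,v]\,\epsilon(u)\epsilon(v)\}.
\]
This is automatic when $[u,v]=-1$, so the entire content of (i) is that $\epsilon(uv)=\epsilon(u)\epsilon(v)$ for every \emph{commuting} pair $u,v$. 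I would record this reduction as the backbone of the argument.

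From this constraint I would read off (ii). For (ii.a), take $u=x$ and $v$ with $[x,v]=1$; if $v\notin\langle x\rangle=\{1,-1,x,-x\}$ then $\epsilon(v)=1$ while $xv\notin\{x,-x\}$, so $\epsilon(xv)=1\neq-1=\epsilon(x)\epsilon(v)$, violating the constraint, whence $v\in\langle x\rangle$. Checking the three subcases according to whether $u,v$ lie in $\{x,-x\}$ shows conversely that (ii.a) removes the only obstruction, so (i)$\Leftrightarrow$(ii.a). For (ii.b), suppose $(x,y,z)=1$; then $P_0=\langle x,y,z\rangle$ is a group, and Lemma~\ref{Lm:Center-P0} leaves two cases. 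If $P_0$ is non-abelian its centre has order $4$ and contains some $c\neq\pm1$; since $c$ is central, $[x,c]=1$, so (ii.a) gives $c\in\langle x\rangle$, forcing $x=\pm c$ to be central in $P_0$ and hence $y,z\in\langle x\rangle$ by (ii.a), contradicting non-abelianness. Thus $P_0$ is abelian and $y,z\in\langle x\rangle\subseteq\langle x,z\rangle$, giving (ii.b) in the strong form $y,z\in\langle x\rangle$.

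For (ii)$\Rightarrow$(iii) I would use minimality of the basis $\{v_1,\dots,v_n\}$ with $v_1=x$: no $v_j$ with $j>1$ lies in $\langle x\rangle$, so (ii.a) forces $[x,v_j]=-1$, i.e. $\lambda_{1j}=1$; and if $(x,v_j,v_k)=1$ for some $1<j<k$ then (ii.b) would place $v_j\in\langle x,v_k\rangle$ or $v_k\in\langle x,v_j\rangle$, again contradicting minimality, so $\lambda_{1jk}=1$. The reverse implication (iii)$\Rightarrow$(ii) is where I expect the genuine work to lie, and I would attack it through the identities $[ab,c]=[a,c][b,c](a,b,c)$ and $(ab,c,d)=(a,c,d)(b,c,d)$ together with the (signed) symmetry of the associator: peeling off the largest index of $\sigma$ and inducting yields a closed expression for $[x,a_\sigma]$ and $(x,a_\sigma,a_\mu)$ purely in terms of the entries $\lambda_{1j},\lambda_{1jk}$, from which (ii.a) and (ii.b) are to be recovered. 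The delicate point — and, I suspect, the crux that ultimately drives the rank bound of the following section — is controlling this expansion for subsets $\sigma$ of size at least $3$, where the accumulated commutator and associator contributions interact and where the parity of the number of terms matters; this is precisely the bookkeeping I expect to be the main obstacle, and the step at which the hypotheses on the basis either do or do not propagate to all of $L$.
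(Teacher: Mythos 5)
Your opening reduction is correct and attractive: writing $\tau_x(w)=\epsilon(w)w$ and observing that, because commutators are central signs, condition (i) is equivalent to ``$\epsilon(uv)=\epsilon(u)\epsilon(v)$ for every commuting pair $u,v$'', and hence to (ii.a), is cleaner than the paper's treatment of that point (the nontrivial subcase of the converse, where $u,v\notin\{x,-x\}$ commute and $uv=\pm x$, does need the identity $[uv,u]=[v,u](u,v,u)=[u,v]$ plus (ii.a), but it works). The implication (ii)$\Rightarrow$(iii) is also fine. The two remaining links, however, both fail. Your derivation of (ii.b) from (ii.a) misapplies Lemma \ref{Lm:Center-P0}: when $(x,y,z)=1$ the group $P_0=\langle x,y,z\rangle$ may well be $2$-generated, and in that degenerate case the dichotomy ``abelian or centre of order $4$'' is false (the lemma's proof tacitly assumes an independent third generator). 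Concretely, take $L=C_1^3$, $x=a_1$, $y=a_2$, $z=a_1a_2$: then $(x,y,z)=(a_1,a_2,a_1)(a_1,a_2,a_2)=1$, while $P_0=\langle a_1,a_2\rangle\cong Q_8$ is non-abelian with centre $\{1,-1\}$ of order $2$, so there is no central $c\neq\pm1$ to feed into (ii.a). Worse, your conclusion --- the ``strong form'' $y,z\in\langle x\rangle$ --- is simply false in this example, even though $\tau_{a_1}$ \emph{is} a half-automorphism of $C_1^3$ (its characteristic vector is all ones, so (iii) holds); so your argument, if valid, would contradict the lemma itself. The paper proves (i)$\Rightarrow$(ii.b) by a different mechanism: Moufang's theorem plus Scott's theorem show that $\tau_x$ restricts to an anti-homomorphism of the group $\langle x,y,z\rangle$, and the contradiction is extracted from $[z,xy]=-1$.

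The direction (iii)$\Rightarrow$(i) (equivalently (iii)$\Rightarrow$(ii)) is exactly what you leave unproven, and the route you sketch --- expanding $[x,a_\sigma]$ and $(x,a_\sigma,a_\mu)$ over one fixed basis --- cannot succeed, because fixed-basis data of type (iii) can never entail (ii.a) in rank $\geq 4$: the very identities you plan to use give, for distinct $j,k,l>1$,
\[
[\,x,\,v_jv_k\cdot v_l\,]=[x,v_j][x,v_k][x,v_l]\,(v_j,v_k,x)(v_j,v_l,x)(v_k,v_l,x)=(-1)^3(-1)^3=1,
\]
so $x$ commutes with an element outside $\langle x\rangle$ whenever all $\lambda_{1j}=\lambda_{1jk}=1$ for that basis. (This is consistent with Corollary \ref{Cr:n>=4}: $\tau_x$ is never a half-automorphism in rank $\geq4$.) The point you are missing is that (iii) is to be read as a condition on \emph{every} generating set with $v_1=x$, and with that reading no bookkeeping is required: if $a\notin\langle x\rangle$ then $\{x,a\}$ extends to a basis, so (iii) forces $[x,a]=-1$; and if $a\notin\langle x,b\rangle$ and $b\notin\langle x,a\rangle$ then $\{x,a,b\}$ extends to a basis, forcing $(x,a,b)=-1$. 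This basis-extension argument is how the paper closes the cycle, via a short case analysis on whether the two factors lie in $\langle x\rangle$; it is the missing idea, not a harder computation.
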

	\begin{proof}
		Assume (i). For (ii.a), let $y\in L$ with $[x,y]=1$ and $y\notin\subsp{x}$. Then $y^{-1}x\notin\subsp{x}$ and we get $[y,y^{-1}x]=[y,y^{-1}][y,x](y^{-1},x,y)=1$. Since $\tau_x$ is a half-automorphism and $[y,y^{-1}x]=1$ we get $\tau_x(y\cdot y^{-1}x) = \tau_x(y)\tau_x(y^{-1}x)$ and then  $$-x = \tau_x(x) = \tau_x(y\cdot y^{-1}x)=\tau_x(y)\tau_x(y^{-1}x)=y\cdot y^{-1}x = x, $$ which is a contradiction.\\
		For (ii.b), let $y,z\in L$ with $(x,y,z) = 1$ and suppose that $y\notin\subsp{x,z}$. 
		%In particular, $y\notin\subsp{x}$ and, by item (i), $[x,y]=-1$.
		If $z\in\subsp{x}$, then $z\in\subsp{x,y}$. Assume, towards a contradiction, that $z\notin\subsp{x}$ and, by item (ii.a), $[x,z] = -1$.
		By a simple calculation, $\tau_x(xz) = xz$, $\tau_x(x)\tau_x(z)=-xz$ and $\tau_x(z)\tau_x(x) = -zx$ and since $[x,z] = -1$, we get $\tau_x(xz) = \tau_x(z)\tau_x(x)$. By Moufang's Theorem and Scott's result, we conclude that $\tau_x$ restricts to an anti-homomorphism in $\subsp{x,y,z}$. We have $yz = \tau_x(yz) = \tau_x(z)\tau_x(y) = zy$ which implies $[y,z] = 1$ and then, $[z,xy] = [z,x][z,y](z,x,y) = -1$. Then, $$z\cdot xy = \tau_x(z\cdot xy) = \tau_x(xy)\tau_x(z) = xy\cdot z = - z\cdot xy,$$ a contradiction, and then (ii) holds. If we assume (ii), then (iii) holds by the definition of characteristic vector. Now, suppose (iii) and let $a$ and $b$ be elements of $L$. First, suppose that $a,b\in\langle x\rangle=\{1,-1,x,-x\}$. Since every central element of $L$ has order $2$, $\tau_x(ab)=ab=\tau_x(a)\tau_x(b)$, if $a,b\in Z(L)$. If $a\in Z(L)$ and $b\notin Z(L)$, then we have $\tau_x(ab)=\tau_x(\pm b)=\pm\tau_x( b)=\tau_x(a)\tau_x(b)$. If $a,b\notin Z(L)$, then $a,b\in Z(L)$ and $\tau_x(ab)=\tau_x(\pm1)=\pm1=ab=\tau_x(a)\tau_x(b)$. Now, suppose that $a,b\notin\langle x\rangle$. We first note that in this case one can consider a basis of $L$ containing either $\{x,a\}$ or $\{x,b\}$ then, by (iii), $[x,a]=[x,b]=-1$. If $ab\notin\langle x\rangle$, $\tau_x(ab)=ab=\tau_x(a)\tau_x(b)$; if $ab=\pm1$, $\tau_x(ab)=\tau_x(\pm1)=\pm1=ab=\tau_x(a)\tau_x(b)$. For the case where $ab=x$, then $1=[b,b]=[b,a^{-1}x]=[b,a^{-1}][b,x](b,a^{-1},x)$ and then $[a,b]=[b,a^{-1}]=-1$, since $(b,a^{-1},x)=(b,a^{-1},ab)$. Then $\tau_x(ab)=-ab=ba=\tau_x(b)\tau_x(a)$. A similar argument can be used for the case $ab=-x$. The only remaining case is the case that $a\in\{x,-x\}$ and $b\notin\langle x\rangle$. In this case, we have $ab\notin\langle x\rangle$, $[x,b]=-1$ and then $[a,b]=-1$. Therefore, $\tau_x(ab)=ab=-ba=
		\tau_x(b)\tau_x(a)$ and $\tau_x$ is a half-automorphism.
	\end{proof}
	\begin{corollary}\label{Cr:n=4}
		Let $L$ be a code loop of rank $4$ and let $x$ be an element of $L$ with $x^2=-1$. Then, the elementary mapping $\tau_x$ is not a half-automorphism. 
	\end{corollary}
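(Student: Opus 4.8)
The plan is to invoke Lemma \ref{lm:1} and reduce the statement to the failure of condition (ii.a). Since that lemma lists (ii.a) (``$[x,y]=1$ implies $y\in\subsp{x}$'') among the characterizations equivalent to ``$\tau_x$ is a half-automorphism'', it suffices to produce, in \emph{any} code loop of rank $4$ with $x^2=-1$, an element $y\notin\subsp{x}$ satisfying $[x,y]=1$. In other words, I will show that the centralizer of $x$ is strictly larger than $\subsp{x}=\{1,-1,x,-x\}$, which contradicts (ii.a) and hence rules out (i).

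First I would encode the commutator with $x$ as a form. Put $V=L/\subsp{-1}$, an $\mathbb{F}_2$-vector space of dimension $4$ (the rank), and write $\bar y$ for the image of $y$; since $-1$ is central we have $[x,-y]=[x,y]$, so the assignment $g(\bar y)=[x,y]$, read in $\{1,-1\}=\mathbb{F}_2$ with multiplicative notation, is well defined on $V$. Using the code-loop identity $[x,yz]=[x,y][x,z](x,y,z)$ (exactly the expansion already used in the proof of Proposition \ref{Pr:code-loops-isomorphic}) together with $(wx,y,z)=(w,y,z)(x,y,z)$, I obtain
\[
g(\bar y+\bar z)=g(\bar y)\,g(\bar z)\,(x,y,z),
\]
so $g$ is a quadratic form on $V$ whose polarization is the bilinear form $T(\bar x,\bar y,\bar z):=(x,y,z)$ obtained by contracting the alternating trilinear associator form with $\bar x$.

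Next I would pass to a three-dimensional space. Because $(x,x,y)=(x,y,x)=1$, we get $g(\bar x)=1$ and $\subsp{\bar x}\subseteq\mathrm{rad}(T(\bar x,\cdot,\cdot))$, so $g$ descends to a quadratic form $\bar g$ on the quotient $V/\subsp{\bar x}$; this quotient has dimension $3$, since $x^2=-1$ forces $x$ to have order $4$ and hence $\bar x\neq 0$. Now condition (ii.a) says precisely that $\bar g(\hat y)=1$ (the neutral value) forces $\hat y=0$, i.e. that $\bar g$ is \emph{anisotropic}. But over $\mathbb{F}_2$ no quadratic form in three or more variables is anisotropic (by Chevalley--Warning a quadratic form in $\ge 3$ variables over a finite field has a nontrivial zero; equivalently, the $u$-invariant of $\mathbb{F}_2$ equals $2$). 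Thus $\bar g$ has a nonzero zero $\hat y$, which lifts to some $y\in L$ with $[x,y]=1$ and $y\notin\subsp{x}$, contradicting (ii.a). Hence $\tau_x$ is not a half-automorphism.

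The only delicate points are the bookkeeping that promotes the loop-theoretic identities to the assertion ``$g$ is a quadratic form with polarization $T(\bar x,\cdot,\cdot)$'' and the verification that $g$ genuinely descends to $V/\subsp{\bar x}$; the decisive ingredient is the non-existence of anisotropic $\mathbb{F}_2$-forms in dimension $\ge 3$. I note that the argument uses $\dim V=4$ only through the inequality $\dim\bigl(V/\subsp{\bar x}\bigr)=n-1\ge 3$, so the same proof in fact yields the stronger statement for all code loops of rank $\ge 4$.
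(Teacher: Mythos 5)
Your proof is correct, but it takes a genuinely different route from the paper's. Both arguments go through Lemma \ref{lm:1}; the paper refutes condition (ii.b), while you refute condition (ii.a). Concretely, the paper invokes Lemma 3.4 of \cite{GP18} to complete $x$ to a basis $\{x,y,z,t\}$ with $(x,y,z)=-1$ and $t$ nuclear, so that $(x,y,t)=1$ although $y\notin\subsp{x,t}$ and $t\notin\subsp{x,y}$. You instead show that the centralizer of $x$ is forced to be larger than $\subsp{x}$: the map $\bar y\mapsto[x,y]$ on $V=L/\{\pm 1\}$ descends to a quadratic form on $V/\subsp{\bar x}\cong\mathbb{F}_2^{n-1}$ whose polarization is the contracted associator form, and since over $\mathbb{F}_2$ every quadratic form in at least three variables is isotropic (Chevalley--Warning, or the classification of anisotropic forms over $\mathbb{F}_2$), there is some $y\notin\subsp{x}$ with $[x,y]=1$ once $n\ge 4$. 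Your supporting checks are sound: well-definedness uses centrality of $-1$, bilinearity of the polarization uses the code-loop identities quoted in Section \ref{Sc:Preliminaires}, and the descent uses $[x,x]=1$ and $(x,y,x)=1$ (diassociativity). As for what each approach buys: the paper's proof is shorter but leans on the structural result of \cite{GP18} that a rank-$4$ code loop admits a basis with a nuclear element, and to reach rank $n\ge 4$ (Corollary \ref{Cr:n>=4}) it must pass through Corollary \ref{Cr:nucleus} and locate a nonassociative rank-$4$ subloop containing $x$; your argument is self-contained modulo Lemma \ref{lm:1}, works uniformly for all $n\ge 4$, and thus yields Corollaries \ref{Cr:n=4} and \ref{Cr:n>=4} in one stroke. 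One caveat, shared with the paper's own proof: Lemma \ref{lm:1} is stated for nonassociative code loops, so that hypothesis must be read into the corollary; in the associative case your form $g$ becomes linear, the same counting produces $y\notin\subsp{x}$ commuting with $x$, and the failure of $\tau_x$ then follows from the direct computation used in the proof of (i)$\Rightarrow$(ii.a).
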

	\begin{proof}
		Suppose $x\in L$, with $x^2=-1$ and, by Lemma 3.4 of \cite{GP18}, consider $\{x,y,z,t\}$ a basis of $L$ in which $(x,y,z)=-1$ and $t$ is a nuclear element of $L$. Then, if $\tau_x$ is a half-automorphism of $L$, we would have $(x,y,t)=1$, $t\notin\subsp{x,y}$ and $y\notin\subsp{x,t}$, which is a contradiction.
	\end{proof}
	\begin{corollary}\label{Cr:nucleus}
		Let $L$ be a nonassociative code loop and let $x\in L$ be with $x^2=-1$. If $\tau_x$ is a half-automorphism, then $x\notin N(L)$. 
	\end{corollary}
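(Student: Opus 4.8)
The plan is to argue by contradiction and to extract the contradiction directly from the characterization of ``$\tau_x$ is a half-automorphism'' supplied by Lemma \ref{lm:1}, in particular from its characteristic-vector form (iii). Assuming $x\in N(L)$, the point is that nuclearity forces every associator $(x,y,z)$ to be trivial, whereas condition (iii) demands that the associators $(x,v_j,v_k)$ be \emph{non}trivial for any basis whose first element is $x$. These two requirements are incompatible as soon as $L$ has rank at least $3$, which is automatic for a nonassociative loop.

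Concretely, I would proceed in four steps. First, since $L$ is nonassociative and Moufang loops are diassociative (any two elements generate a group), $L$ cannot be $2$-generated, so its rank satisfies $n\geq 3$. Second, since $x^2=-1$ we have $x\neq\pm 1$, and I would extend $\{x\}$ to a basis $\{v_1=x,v_2,\ldots,v_n\}$ of $L$. Third, apply the implication (i)$\Rightarrow$(iii) of Lemma \ref{lm:1} to this basis: as $\tau_x$ is a half-automorphism, the characteristic vector relative to $\{v_1,\ldots,v_n\}$ satisfies $\lambda_{1jk}=1$ for all $1<j<k$, so using $n\geq 3$ we get in particular $(x,v_2,v_3)=(-1)^{\lambda_{123}}=-1$. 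Fourth, observe that $x\in N(L)$ forces $(x,v_2,v_3)=1$, a contradiction; hence $x\notin N(L)$.

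The one step requiring care --- and the main technical obstacle --- is the extension of $x$ to a basis having $x$ as its first element. I would handle this by passing to the elementary abelian $2$-group $\overline{L}=L/\{1,-1\}\cong\mathbb{F}_2^n$ (here $\{1,-1\}$ is central, all associators and commutators and squares of $L$ lie in it, and $|\overline{L}|=2^n$ where $n$ is the rank). The image $\overline{x}$ is a nonzero vector, so it can be completed to an $\mathbb{F}_2$-basis of $\overline{L}$; lifting this basis back to $L$ and using that $-1=x^2$ already lies in the subloop generated by the lifts shows that the lifts form a minimal generating set of $L$ containing $x$. Once this is in place the contradiction is immediate.

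As an alternative that sidesteps (iii) entirely, one could argue from part (ii.b) of Lemma \ref{lm:1}: nuclearity makes $(x,y,z)=1$ hold for \emph{all} $y,z$, so (ii.b) would force $y\in\langle x,z\rangle$ or $z\in\langle x,y\rangle$ for every $y,z\in L$. Projecting this to $\overline{L}/\langle\overline{x}\rangle$ says that any two vectors are equal or one of them is zero, which bounds $\dim\overline{L}$ by $2$, hence the rank by $2$, and again contradicts nonassociativity. I would present the (iii)-route as the main proof and mention this variant as a remark.
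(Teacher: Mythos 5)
Your proof is correct and takes essentially the same approach as the paper: the paper's proof likewise extends $x$ to a basis and invokes Lemma \ref{lm:1} (in its form (ii.b) rather than (iii), but these are interchangeable facets of the same lemma) to conclude that $x$ cannot associate with the other generators, contradicting nuclearity. Your careful justification that $x$ extends to a basis, via $L/\{1,-1\}\cong\mathbb{F}_2^n$, fills in a step the paper asserts without proof.
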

	\begin{proof}
		First, since $x^2=-1$ there is a basis of $L$ containing $x$. By Lemma \ref{lm:1}, if $\tau_x$ is a half-automorphism of $L$, then for any $y,z\in L$ with $(x,y,z)=1$ we have either $y\in\subsp{x,z}$ or $z\in\subsp{x,y}$. Then $x$ must not lie in $N(L)$ because $x$ can not associate with the others generators of $L$. 
	\end{proof}
	\begin{corollary}\label{Cr:n>=4}
		Let $L$ be a nonassociative code loop of rank $n\ge 4$ and let $x\in L$. If $x^2=-1$ then $\tau_x$ is not a half-automorphism.
	\end{corollary}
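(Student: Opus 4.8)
The plan is to argue by contradiction, leaning on the characterization in Lemma~\ref{lm:1} and exploiting that rank $n\ge 4$ leaves enough room to build an explicit violation of condition (ii.b). So I would suppose that $\tau_x$ is a half-automorphism and derive a contradiction. Since $x^2=-1$, the element $x$ lies in some basis $\{v_1,\ldots,v_n\}$ of $L$ with $v_1=x$, and $n\ge 4$. Because $\tau_x$ is assumed to be a half-automorphism, Lemma~\ref{lm:1} supplies the characteristic-vector data $\lambda_{1j}=\lambda_{1jk}=1$ for all $1<j<k$; equivalently $[x,v_j]=-1$ and $(x,v_j,v_k)=-1$ for all $2\le j<k\le n$. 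The point of the hypothesis $n\ge 4$ is precisely that I can then select three distinct generators $v_2,v_3,v_4$ different from $x$.

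Next I would manufacture a pair $(y,z)$ that associates trivially with $x$ yet escapes the two subloops demanded by (ii.b). Set $y=v_2$ and $z=v_3v_4$. Using that the associator of a code loop is an alternating (hence symmetric over $\mathbb{F}_2$) trilinear form, which follows from the stated identity $(wx,y,z)=(w,y,z)(x,y,z)$ together with the symmetry of the associator, I obtain linearity in the last slot and compute
\[
(x,y,z)=(x,v_2,v_3v_4)=(x,v_2,v_3)(x,v_2,v_4)=(-1)(-1)=1 .
\]
Thus $(x,y,z)=1$, so (ii.b) would force $y\in\subsp{x,z}$ or $z\in\subsp{x,y}$, and it remains to exclude both.

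Both are ruled out by linear algebra in the quotient. The projection $L\to V:=L/\{\pm 1\}$ is a loop homomorphism onto an $\mathbb{F}_2$-space with basis $\overline{v_1},\ldots,\overline{v_n}$, and since $-1=x^2\in\subsp{x}$, an element $w$ lies in $\subsp{x,u}$ if and only if $\overline{w}$ lies in the span $\langle\overline{x},\overline{u}\rangle$ in $V$. Here $\subsp{x,z}$ projects to $\langle\overline{v_1},\overline{v_3}+\overline{v_4}\rangle$ and $\subsp{x,y}$ to $\langle\overline{v_1},\overline{v_2}\rangle$; by independence of the $\overline{v_i}$ the first span omits $\overline{v_2}$ and the second omits $\overline{v_3}+\overline{v_4}$. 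Hence $y\notin\subsp{x,z}$ and $z\notin\subsp{x,y}$, contradicting (ii.b), and therefore $\tau_x$ is not a half-automorphism.

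The main obstacle, and the only step that is not a one-line computation, is this last paragraph: I must justify carefully that the generated subloop $\subsp{x,u}$ is exactly the preimage of the $\mathbb{F}_2$-span $\langle\overline{x},\overline{u}\rangle$, so that non-membership is read off from linear independence of the basis elements. As an alternative route that sidesteps the explicit witnesses, I could instead reduce to the rank-$4$ case: the subset $\{x,v_2,v_3,v_4\}$ generates a code subloop $L'$ of rank $4$, nonassociative because $(x,v_2,v_3)=-1$; the restriction $\tau_x|_{L'}$ is exactly the elementary mapping $\tau_x$ of $L'$ and is again a half-automorphism, which contradicts Corollary~\ref{Cr:n=4}. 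I would present the direct argument above as the primary proof and note this reduction as a cross-check.
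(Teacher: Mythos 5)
Your proof is correct, but it takes a genuinely different route from the paper's. The paper argues by reduction: for $n=4$ it invokes Corollary \ref{Cr:n=4}, whose proof rests on the existence (Lemma 3.4 of \cite{GP18}) of a basis $\{x,y,z,t\}$ with $t$ nuclear, so that $(x,y,t)=1$ immediately violates condition (ii.b) of Lemma \ref{lm:1}; for $n>4$ it uses Corollary \ref{Cr:nucleus} to get $x\notin N(L)$, extracts a nonassociative rank-$4$ subloop containing $x$, and restricts $\tau_x$ to it, contradicting Corollary \ref{Cr:n=4}. You instead give a single uniform argument for all $n\ge 4$: Lemma \ref{lm:1}(iii) forces $(x,v_j,v_k)=-1$ for all $j<k$, trilinearity of the associator gives $(x,v_2,v_3v_4)=1$, and linear algebra in $V=L/\{\pm 1\}$ rules out both memberships required by (ii.b). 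Two remarks on your execution. First, the trilinearity step is legitimate but rests on the permutation symmetry of the associator, which the paper never states (it only states linearity in the first slot); this is standard for code loops --- associators are central involutions, so the Moufang skew-symmetry becomes symmetry --- and should be cited (e.g.\ to \cite{CG} or \cite{Griess}) rather than waved at. Second, the step you flag as the main obstacle is easier than you suggest: to conclude $v_2\notin\subsp{x,v_3v_4}$ you only need the \emph{easy} containment, namely that the image of $\subsp{x,v_3v_4}$ in $V$ lies in the span of $\overline{x}$ and $\overline{v_3}+\overline{v_4}$ (a homomorphic image of a generated subloop is generated by the images); the full preimage characterization is not needed. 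As for what each approach buys: the paper's proof is a two-line reduction given its earlier corollaries, while yours is self-contained past Lemma \ref{lm:1}, avoids the nuclear-element structure theorem from \cite{GP18} entirely, and in fact reproves Corollary \ref{Cr:n=4} as the special case $n=4$. Your closing ``cross-check'' reduction is essentially the paper's $n>4$ argument, except that you certify nonassociativity of the rank-$4$ subloop $\subsp{x,v_2,v_3,v_4}$ directly from $(x,v_2,v_3)=-1$ rather than through $x\notin N(L)$.
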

	\begin{proof}
		For $n=4$ we have Corollary \ref{Cr:n=4}. In the other cases, if $x^2=-1$ and $\tau_x$ is a half-automorphism, we have $x\notin N(L)$ and then there is a nonassociative subloop $L'$ of rank $4$ containing $x$. By the definition of $\tau_x$, it restricts to a half-automorphism of $L'$ which contradicts Corollary \ref{Cr:n=4}.
	\end{proof}
	Non-identical elementary mappings given by elements only have the chance to be a half-automorphism when the rank of the code loop is $3$. Then we aim at generalizing this concept for code loops with greater ranks.
	\begin{condition}\label{Cd:subgroupA}
		Suppose that $L$ is a nonassociative code loop with a maximal abelian
		subgroup $A$ satisfying the following properties:
		\begin{itemize}
			\item[(i)] For all $a$ and $b$ in $A$ and $x$ in $L$, we have $(a,b,x)=1$;
			\item[(ii)] For all $a\in A$ and $x\notin A$, we have $[a,x]=a^2$;
			% \textcolor{blue}{
				% \item[(iii)] For all $a\in A$ with $a^2=-1$ and $x,y\notin A$ we have either $y\in\subsp{a,x}$ or $x\in\subsp{a,y}$ whenever $(a,x,y) = 1$.}
			\item[(iii)] There exists $a\in A$ such that $a^2=-1$;
		\end{itemize}
	\end{condition}
	\begin{definition}
		For a subloop $A$ in a diassociative loop $L$, the generalized elementary mapping of the subloop $A$, $\tau_A:L\to L$ is defined by 
		$$\tau_A(x)=\begin{cases}
			x^{-1},&\mbox{if}\quad x\in A,\\
			x, &\mbox{otherwise}.
		\end{cases}$$
	\end{definition}
	\begin{remark}
		In the case where $L$ is a nonassociative code loop of rank $3$, the maximal abelian subgroups $A$ which satisfy item (i) and (iv) of Condition \ref{Cd:subgroupA} are exactly the cyclic subgroups of $L$. In fact, suppose that $A$ has two generators $a$ and $b$, with $a^2=-1$ and let $L=\langle x,y,z\rangle$. Since $|A|=|\langle a\rangle\langle b\rangle|=\dfrac{|\langle a\rangle||\langle b\rangle|}{|\langle a\rangle\cap\langle b\rangle|}$, we get $|A|=8$ no matter the order of $b$. Since $A$ is not all $L$, we can assume without loss of generality that $x\notin A$. Then $|\langle A,x\rangle|=16$ and $L=\langle a,b,x\rangle$. By item (i), $(a,b,x)=1$, and by Moufang's Theorem $L$ is associative, which is a contradiction.
	\end{remark}
	
	\begin{lemma}\label{Lm:tau_A}
		Let $L$ be a code loop with maximal abelian subgroup $A$ satisfying Condition \ref{Cd:subgroupA}. Then the generalized elementary mapping $\tau_A$ is a nonidentical half-automorphism.  
	\end{lemma}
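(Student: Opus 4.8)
The plan is to reduce the half-automorphism condition to a purely multiplicative statement about signs. First I would observe that $\tau_A$ never moves an element off its own line: since $L^2=\{1,-1\}\le Z(L)$, for $u\in A$ one has $u^{-1}=u^2\cdot u$, so setting $\epsilon(u)=u^2\in\{1,-1\}$ for $u\in A$ and $\epsilon(u)=1$ for $u\notin A$, we get $\tau_A(u)=\epsilon(u)\,u$ in every case, with $\epsilon(u)$ central. Then for any $x,y\in L$ we have $\tau_A(xy)=\epsilon(xy)\,xy$, while $\tau_A(x)\tau_A(y)=\epsilon(x)\epsilon(y)\,xy$ and $\tau_A(y)\tau_A(x)=\epsilon(x)\epsilon(y)\,yx$. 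Writing $yx=[x,y]\,xy$ with $[x,y]\in\{1,-1\}$, the requirement $\tau_A(xy)\in\{\tau_A(x)\tau_A(y),\tau_A(y)\tau_A(x)\}$ becomes $\epsilon(xy)\epsilon(x)\epsilon(y)\in\{1,[x,y]\}$. When $[x,y]=-1$ this holds automatically, so the only pairs that impose a condition are the commuting ones, for which I must show $\epsilon(xy)=\epsilon(x)\epsilon(y)$.

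So the whole proof comes down to verifying $\epsilon(xy)=\epsilon(x)\epsilon(y)$ for commuting $x,y$, which I would split according to membership in $A$. If $x,y\in A$ then $xy\in A$ and, using commutativity together with the code-loop identity $(xy)^2=x^2y^2[x,y]$, one gets $\epsilon(xy)=(xy)^2=x^2y^2=\epsilon(x)\epsilon(y)$. If exactly one lies in $A$, say $x\in A$ and $y\notin A$, then Condition \ref{Cd:subgroupA}(ii) forces $[x,y]=x^2$; commutativity gives $x^2=1$, hence $\epsilon(x)=1$, and $xy\notin A$ (otherwise $y=x^{-1}(xy)\in A$), so $\epsilon(xy)=1=\epsilon(x)\epsilon(y)$. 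If $x,y\notin A$ then $\epsilon(x)=\epsilon(y)=1$ and I need $\epsilon(xy)=1$, which is immediate whenever $xy\notin A$.

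The remaining case, $x,y\notin A$ with $xy\in A$, is the crux. Here I would set $b=xy\in A$ and use the inverse property to write $y=x^{-1}b$. Applying $(uv)^2=u^2v^2[u,v]$ gives $y^2=x^{-2}b^2[x^{-1},b]$; since commutators are insensitive to the central sign (so $[x^{-1},b]=[x,b]$) and Condition \ref{Cd:subgroupA}(ii) gives $[x,b]=b^2$, while $x^{-2}=x^2$, this collapses to $y^2=x^2(b^2)^2=x^2$. Thus $x^2=y^2$, and with $[x,y]=1$ the squaring identity yields $\epsilon(xy)=(xy)^2=x^2y^2=(x^2)^2=1=\epsilon(x)\epsilon(y)$, as required. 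Finally, $\tau_A$ is nonidentical because Condition \ref{Cd:subgroupA}(iii) supplies $a\in A$ with $a^2=-1$, whence $\tau_A(a)=a^{-1}=-a\neq a$.

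I expect this last sub-case to be the only genuine obstacle: it is the one place where $\tau_A$ could fail to respect the splitting of $L$ induced by $A$, and it is exactly where Condition \ref{Cd:subgroupA}(ii) must be used to pin down $x^2=y^2$. Everything else is forced by the sign bookkeeping of the first paragraph, so once the reduction to commuting pairs is in hand the argument is essentially routine case-checking.
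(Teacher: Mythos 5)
Your proof is correct, but it is organized differently from the paper's own argument. The paper verifies the half-automorphism condition directly: it runs through the cases determined by membership of $x$, $y$, $xy$ in $A$, splits each case according to whether the relevant square is $1$ or $-1$, and exhibits $\tau_A$ explicitly as homomorphic or anti-homomorphic on each pair via computations such as $yx=x^{-1}a\cdot x=-a=a^{-1}$ (using Condition \ref{Cd:subgroupA}(ii) together with diassociativity). You instead encode $\tau_A$ as multiplication by a central sign function $\epsilon$ and reduce the whole problem to multiplicativity of $\epsilon$ on commuting pairs --- essentially the same device that underlies the paper's $R_\mathcal{B}$ correspondence (Proposition \ref{Pr:RBHB}), here applied to a map that does not fix a basis. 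This reduction makes every non-commuting pair automatic, so the paper's anti-homomorphism computations vanish from your argument; they correspond exactly to your ``free'' pairs, since by item (ii) the configuration $a^2=-1$, $x\notin A$ forces $[a,x]=-1$. Conversely, your crux case ($x,y\notin A$ commuting with $xy=b\in A$) requires you to show that $b^2$ is then forced to equal $1$, a fact the paper never needs to isolate because it treats $b^2=-1$ through the anti-homomorphic branch; note that this step also admits a shorter derivation than your computation of $y^2$: item (ii) gives $b^2=[b,x]$, and the expansion $[xy,x]=[x,x][y,x](x,y,x)$ gives $[b,x]=[y,x]=1$. Your route buys a cleaner conceptual picture (item (ii) is exactly what trivializes the signs across the boundary of $A$ on commuting pairs); the paper's route buys an explicit identification of where $\tau_A$ acts homomorphically versus anti-homomorphically. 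One small point in your favor: the paper derives non-identicality from ``item (iv)'' of Condition \ref{Cd:subgroupA}, which does not exist (the condition has only items (i)--(iii)); you correctly invoke item (iii).
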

	\begin{proof}
		By item (iv), we have that $\tau_A$ is not the identical map.
		Let $x, y\in L$. Clearly, $\tau_A(xy)=\tau_A(x)\tau_A(y)$ if either $x,y\in A$ or $x,y,xy\notin A$. If $x,y\notin A$ and $xy=a\in A$, we get $y=x^{-1}a$ and $y^{-1} = a^{-1}x$. We have $\tau_A(xy)=\tau_A(a)=a^{-1}$. If $a^2=1$, then $\tau_A(x)\tau_A(y)=xy=a=a^{-1}$. If $a^2=-1$, by item (ii) we get $$\tau_A(y)\tau_A(x)=yx=x^{-1}a\cdot x = x^{-1}\cdot ax = -x^{-1}\cdot xa= -a = a^{-1}.$$ Finally, consider $a\in A$ and $x\notin A$. If $a^2=1$, we get $\tau_A(ax)=ax=\tau_A(a)\tau_A(x)$ and, similarly $\tau_A(xa)=\tau_A(x)\tau_A(a)$. If $a^2=-1$, then $\tau_A(ax)=ax=-xa=x(-a)=\tau_A(x)\tau_A(a)$ and, similarly, $\tau_A(xa)=\tau_A(a)\tau_A(x)$. Therefore, $\tau_A$ is a half-automorphism.
	\end{proof}
	Using GAP, we search within all $16$ code loops of rank $4$ the maximal abelian subgroups satisfying Condition \ref{Cd:subgroupA}. As a result, only $4$ of them have such subgroup: $C_2^4, C_3^4, C_1^4$ and $C_4^4$. For $C_2^4$ and $C_4^4$ there exists only one generalized elementary mapping which is an anti-automorphism. On the other hand, $C_3^4$ and $C_1^4$ have, respectively, $3$ and $7$ generalized elementary mappings that are nontrivial half-automorphisms. The loop $C_6^4$ has nontrivial half-automorphism but does not have generalized elementary mapping, and then we notice that there exists nontrivial half-automorphism which is not related to generalized elementary mapping.
	
	We finish this paper with:
	\begin{problem}
		What conditions must the subgroup $A$ satisfy for the generalized elementary mapping $\tau_A$ to be a nontrivial half-automorphism?
	\end{problem}
	
	Based on Proposition \ref{Pr:code-loops-isomorphic}, we suggest the following:
	
	\begin{problem}
		Let $f:L\to Q$ be a half-isomorphism of two Moufang loops. Is $L$ isomorphic to $Q$?
	\end{problem}
	\subsection*{Acknowledgments}
	
	The computational calculations in this work have been made by using the LOOPS package \cite{NV1} for GAP \cite{gap}.  The authors thank the National Council for Scientific and Technological Development CNPq (grant 406932/2023-9). Grishkov was supported by FAPESP (grant 2024/14914-9), CNPq (grant 307593/2023-1), and in accordance with the state task of the IM SB RAS, project FWNF-2022-003. The authors thank Giliard Souza dos Anjos for productive conversation about the topic.
	
	% {\bf Declarations}

	% {\bf Ethical standard}  Not applicable in this manuscript.
	
	% {\bf Conflict of interest} The authors declare no competing interests.
	
	%%%%%%%%%%%%%%%%%%%%%%%%%%%
	%%%%%%%%%%%%%%%%%%%%%%%%%%%
	%%%%%%%%%%%%%%%%%%%%%%%%%%%
	%%%%%%%%%%%%%%%%%%%%%%%%%%%


\begin{thebibliography}{99}
		
		\bibitem{CG}
		O. Chein and E. G. Goodaire,
		{\it Moufang loops with a unique nonidentity commutator(associator, square)}, J. Algebra 130 (1990) 369–384.
		
		\bibitem{BG} 
		D. A. S. de Barros, G. S. dos Anjos, 
		{\it  Half-automorphism group of a class of Bol loops}, 
		Communications in Algebra (ONLINE), v. 51, p. 1-13, 2022.
		
		\bibitem{BP} 
		D. A. S. de Barros, R. M. Pires, 
		{\it  Half-automorphism of some code loops}, 
		Banach Center Publications, 129, (2025), 51-66.
		
		\bibitem{G21} 
		G. S. dos Anjos, 
		{\it Half-automorphism group of Chein loops,}
		to appear in Acta Sci. Math. (Szeged).
		
		\bibitem{G21-2} 
		G. S. dos Anjos, 
		{\it Half-isomorphisms whose inverses are also half-isomorphisms}
		Quasigroups and Related Systems, 29, (2021), 145 - 156.
		
		\bibitem{GG}
		S. Gagola III, M. L. Merlini Giuliani,
		{\it Half-isomorphisms of Moufang loops of odd order}, 
		Journal of Algebra and Its Applications, \textbf{11}, (2012), 194-199.
		
		\bibitem{GG2}
		S. Gagola III, M. L. Merlini Giuliani,
		{\it On half-automorphisms of certain Moufang loops with even order},
		Journal of Algebra, \textbf{386}, (2013), 131-141.
		
		\bibitem{gap} 
		The GAP Group, GAP - Groups, Algorithms, and Programming, Version 4.12.2, 2022.\\
		\url{http://www.gap-system.org}
		
		\bibitem{Griess}
		R. L. Griess Jr,
		{\it Code loops}, 
		J. Algebra 100 (1986) 224–234.
		
		\bibitem{GP18}
		A. Grichkov, R. M. Pires,
		{\it Variety of loops generated by code loops}, 
		Intern. Journal of Algebra and Computation, \textbf{28}, (2018), 163-177
		
		\bibitem{KSV}
		M. Kinyon, I. Stuhl, P. Vojt\v{e}chovsk\'y,
		{\it Half-Isomorphisms of Moufang Loops},
		Journal of Algebra, \textbf{450}, (2016), 152-161.
		
		\bibitem{GPS}
		M. L. Merlini Giuliani, P. Plaumann, L. Sabinina,
		{\it Half-automorphisms of Cayley-Dickson loops}, 
		In: Falcone, G., ed., Lie Groups, Differential Equations and Geometry. Cham: Springer (2017), 109-125.
		
		
		\bibitem{GA19}
		M. L. Merlini Giuliani, G. S. dos Anjos,
		{\it Half-isomorphisms of dihedral automorphic loops},
		Communications in Algebra, \textbf{48}, (2020), no. 3, 1150-1162.
		
		\bibitem{GA21}
		M. L. Merlini Giuliani, G. S. dos Anjos,
		{\it Half-isomorphisms of automorphic loops}, 
		Communications in Algebra (ONLINE), v. 1, p. 1-15, 2023.
		
		\bibitem{GA20} M. L. Merlini Giuliani, G. S. dos Anjos,
		{\it Lie automorphic loops under half-automorphisms},
		Journal of Algebra and Its Applications, \textbf{19}, (2020), no. 11, 2050221.
		
		\bibitem{NV}
		G. P. Nagy, M. Valsecchi,
		{\it On nilpotent Moufang loops with central associators}, 
		J. Algebra 307 (2007) 547-564.
		
		\bibitem{NV1}
		G. P. Nagy, P. Vojt\v echovsk\' y,
		LOOPS: Computing with quasigroups and loops in GAP, version 3.4.0, package for GAP, \url{https://cs.du.edu/~petr/loops/}
		
		\bibitem{P90}
		H.O. Pflugfelder,  
		{\it Quasigroups and Loops: Introduction},
		Sigma Series in Pure Math. \textbf{7}, Heldermann, 1990.
		
		\bibitem{Rose}
		R. M. Pires,
		{\it Loops de código: automorfismos e representações}, 
		PhD Dissertation, Instituto de Matemática e Estatística, Universidade de São Paulo, May 2010. (Portugese)
		
		\bibitem{Scott}
		W. R. Scott,
		{\it Half-homomorphisms of groups}, 
		Proc. Amer. Math. Soc., \textbf{8}, (1957), 1141-1144.
	\end{thebibliography}
\end{document}